\def\RR{{\mathbb R}}
\def\SSphere{{\mathbb S}}
\def\consta{{c_1}}
\def\constb{{c_2}}
\newcounter{marnote}
\begin{document}
\newtheorem{THM}{Theorem}
\renewcommand*{\theTHM}{\Alph{THM}}
\newtheorem{Def}{Definition}[section]
\newtheorem{thm}[Def]{Theorem}
\newtheorem{lem}[Def]{Lemma}
\newtheorem{question}[Def]{Question}
\newtheorem{prop}[Def]{Proposition}
\newtheorem{cor}[Def]{Corollary}
\newtheorem{clm}[Def]{Claim}
\newtheorem{step}[Def]{Step}
\newtheorem{sbsn}[Def]{Subsection}
\newtheorem{conj}[Def]{Conjecture}
\theoremstyle{remark}
\newtheorem{rem}[Def]{Remark}

\numberwithin{equation}{section}


\title{Solutions to the $\sigma_k$-Loewner-Nirenberg problem on annuli are locally Lipschitz and not differentiable}

\author{YanYan Li \footnote{Department of Mathematics, Rutgers University, Hill Center, Busch Campus, 110 Frelinghuysen Road, Piscataway, NJ 08854, USA. Email: yyli@math.rutgers.edu.}~\footnote{Partially supported by NSF grant DMS-1501004.}~ and Luc Nguyen \footnote{Mathematical Institute and St Edmund Hall, University of Oxford, Andrew Wiles Building, Radcliffe Observatory Quarter, Woodstock Road, Oxford OX2 6GG, UK. Email: luc.nguyen@maths.ox.ac.uk.}}
\date{}
\maketitle

\begin{center}
Dedicated to Alice Chang and Paul Yang on their 70th birthday
\end{center}

\begin{abstract}
We show for $k \geq 2$ that the locally Lipschitz viscosity solution to the $\sigma_k$-Loewner-Nirenberg problem on a given annulus $\{a < |x| < b\}$ is $C^{1,\frac{1}{k}}_{\rm loc}$ in each of $\{a < |x| \leq \sqrt{ab}\}$ and $\{\sqrt{ab} \leq |x| < b\}$ and has a jump in radial derivative across $|x| = \sqrt{ab}$. Furthermore, the solution is not $C^{1,\gamma}_{\rm loc}$ for any $\gamma > \frac{1}{k}$. Optimal regularity for solutions to the $\sigma_k$-Yamabe problem on annuli with finite constant boundary values is also established.
\end{abstract}

\section{Introduction}

Let $\Omega$ be a smooth bounded domain in $\RR^n$, $n \geq 3$. For a positive $C^2$ function $u$ defined on an open subset of $\RR^n$, let $A^u$ denote its conformal Hessian, namely
\begin{equation}\label{conformal-Hessian}
A^u = -\frac{2}{n-2} u^{-\frac{n+2}{n-2}}\,\nabla^2 u + \frac{2n}{(n-2)^2} u^{-\frac{2n}{n-2}} \nabla u \otimes \nabla u - \frac{2}{(n-2)^2}\,u^{-\frac{2n}{n-2}}\,|\nabla u|^2\,I,
\end{equation}
and let $\lambda(-A^u)$ denote the eigenvalues of $-A^u$. Note that $A^u$, considered as a $(0,2)$ tensor, is  the Schouten curvature tensor of the metric  $u^{\frac{4}{n-2}}\mathring{g}$ where $\mathring{g}$ is the Euclidean metric.

For $1 \leq k \leq n$, let $\sigma_k: \RR^n \rightarrow \RR$ denote $k$-th elementary symmetric function
\[
\sigma_k(\lambda)=\sum_{i_1<\ldots<i_k} \lambda_{i_1}\ldots\lambda_{i_k}, 
\]
and let $\Gamma_k$ denote the cone $\Gamma_k=\{\lambda=(\lambda_1,\ldots,\lambda_n):\sigma_1(\lambda)>0,\ldots,\sigma_k(\lambda)>0\}$. 

In \cite[Theorem 1.1]{GonLiNg}, it was shown that the $\sigma_k$-Loewner-Nirenberg problem
\begin{align}
\sigma_k(\lambda(-A^u)) 
	&= 2^{-k} \Big(\begin{array}{c}n\\k\end{array}\Big), \qquad \lambda(-A^u) \in \Gamma_k, \qquad u  > 0 \qquad  \text{ in } \Omega
	\label{Eq:X1},\\
u(x) &\rightarrow \infty  \text{ as } \textrm{dist}(x,\partial\Omega) \rightarrow 0.
	\label{Eq:X1BC}
\end{align}
has a unique continuous viscosity solution $u$ and such $u$ belongs to $C^{0,1}_{\rm loc}(\Omega)$. Furthermore, $u$ satisfies
\begin{equation}
\lim_{d(x,\partial\Omega) \rightarrow 0} u(x) d(x,\partial\Omega)^{-\frac{n-2}{2}}= C(n,k) \in (0,\infty).
	\label{Eq:08IV20-E1}
\end{equation}

Equation \eqref{Eq:X1} is a fully nonlinear elliptic equation of the kind considered by Caffarelli, Nirenberg and Spruck \cite{C-N-S-Acta}. We recall the following definition of viscosity solutions which follows Li \cite[Definitions 1.1 and 1.1']{Li09-CPAM} (see also \cite{Li06-Arxivv2}) where viscosity solutions were first considered in the study of nonlinear Yamabe problems.

Let
\begin{align}
\overline{S}_k 
	&:= \Big\{\lambda \in\Gamma_k \big|  \sigma_k(\lambda) \geq 2^{-k} \Big(\begin{array}{c}n\\k\end{array}\Big)\Big\},
	\label{Eq:oSkDef}\\
\underline{S}_k
	&:= \RR^n \setminus \Big\{\lambda \in \Gamma_k\Big| \sigma_k(\lambda) > 2^{-k} \Big(\begin{array}{c}n\\k\end{array}\Big)\Big\}.
	\label{Eq:uSkDef}
\end{align}

\begin{Def}\label{Def:ViscositySolution}
Let $\Omega\subset\mathbb{R}^{n}$ be an open set and $1 \leq k \leq n$. We say that an upper semi-continuous (a lower semi-continuous) function $u: \Omega \rightarrow (0,\infty)$ is a sub-solution (super-solution) to \eqref{Eq:X1} in the viscosity sense, if for any $x_{0}\in\Omega$, $\varphi\in C^{2}(\Omega)$ satisfying $(u-\varphi)(x_{0})=0$ and $u-\varphi\leq0$ $(u-\varphi\geq0)$ near $x_{0}$, there holds
\[
\lambda\big(-A^\varphi(x_{0})\big)\in \overline{S}_k \qquad
 \left(\lambda\big(-A^\varphi(x_{0})\big) \in \underline{S}_k, \text{respectively}\right).
\]

We say that a positive function $u \in C^0(\Omega)$ satisfies \eqref{Eq:X1} in the viscosity sense if it is both a sub- and a super-solution to \eqref{Eq:X1} in the viscosity sense.
\end{Def}

Equation \eqref{Eq:X1} satisfies the following comparison principle, which is a consequence of the principle of propagation of touching points \cite[Theorem 3.2]{LiNgWang}: If $v$ and $w$ are viscosity sub-solution and super-solution of \eqref{Eq:X1} and if $v \leq w$ near $\partial\Omega$, then $v \leq w$ in $\Omega$; see \cite[Proposition 2.2]{GonLiNg}. The above mentioned uniqueness result for \eqref{Eq:X1}-\eqref{Eq:X1BC} is a consequence of this comparison principle and the boundary estimate \eqref{Eq:08IV20-E1}.

In the rest of this introduction, we assume that  $\Omega$ is an annulus $\{a < |x|< b\} \subset \RR^n$ with $0 < a < b < \infty$, unless otherwise stated. $C^2$ radially symmetric solutions to \eqref{Eq:X1} were classified by Chang, Han and Yang \cite[Theorems 1 and 2]{C-H-Y}. As a consequence, when $2 \leq k \leq n$, there is no $C^2$ radially symmetric function satisfying \eqref{Eq:X1}-\eqref{Eq:X1BC}. On the other hand, the aforementioned uniqueness result from \cite{GonLiNg, LiNgWang} implies that the solution $u$ to \eqref{Eq:X1}-\eqref{Eq:X1BC} is radially symmetric (since $u(R \cdot)$ is also a solution for any orthogonal matrix $R$). Therefore, \eqref{Eq:X1}-\eqref{Eq:X1BC} has no $C^2$ solutions. 

Our first result improves on the above non-existence of $C^2$ solutions to \eqref{Eq:X1}-\eqref{Eq:X1BC}.

\begin{thm}\label{Thm:AnnularNoSub}
Suppose that $n \geq 3$ and $\Omega$ is a non-empty open subset of $\RR^n$. Then there exists no positive function $\underline{u} \in C^2(\Omega)$ such that $\lambda(-A^{\underline{u}}) \in \bar\Gamma_2$ in $\Omega$ and that $(\Omega, u^{\frac{4}{n-2}}\mathring{g})$ admits a smooth minimal immersion $f: \Sigma^{n-1} \rightarrow \Omega$ for some smooth compact manifold $\Sigma^{n-1}$.
\end{thm}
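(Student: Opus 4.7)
The plan is to argue by contradiction: from the putative $\underline u$ I aim to derive that $f$ must also be a minimal immersion in the \emph{Euclidean} metric, i.e.\ that $\Sigma$ has vanishing Euclidean mean curvature $H_0 \equiv 0$ everywhere. This is impossible because on any compact immersed hypersurface in $\RR^n$ one has the identity $\bar\Delta_{\tilde g_0}|f|^2 = 2(n-1) + 2\langle f, \vec H_0\rangle$, which for $H_0 \equiv 0$ reduces to the strictly positive constant $2(n-1)$, whereas the integral of the left-hand side over compact $\Sigma$ vanishes. The bridge from the ambient conformal geometry of $g := \underline u^{4/(n-2)} \mathring g$ to this Euclidean conclusion will be the Gauss equation, weighted against $\bar u := \underline u \circ f$ and integrated on $\Sigma$ against the Euclidean induced volume.

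The first step is a pointwise algebraic lemma: if $B$ is symmetric with $\lambda(B) \in \bar\Gamma_2$, then the partial trace $\mathrm{tr}_V B \geq 0$ for every codimension-one subspace $V \subset \RR^n$. Since $\mathrm{tr}_V B \geq \sigma_1(B) - \lambda_{\max}(B) = \sum_{i<n}\lambda_i^{\uparrow}(B)$ with eigenvalues in increasing order, it suffices to show $s := \sum_{i<n}\lambda_i^{\uparrow}(B) \geq 0$. If $s < 0$, then $\sigma_1(B) \geq 0$ forces $\lambda_n^{\uparrow} \geq -s > 0$, so $\lambda_n^{\uparrow} s \leq -s^2$, and the Newton--MacLaurin bound $\sigma_2(\lambda_1^{\uparrow},\dots,\lambda_{n-1}^{\uparrow}) \leq \tfrac{n-2}{2(n-1)} s^2$ combines with it to give $\sigma_2(B) \leq -\tfrac{n}{2(n-1)} s^2 < 0$, contradicting $\lambda(B) \in \bar\Gamma_2$. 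Applied to $B = -A^{\underline u}$ with $V$ the tangent plane to $f(\Sigma)$, this delivers the pointwise bound $\mathrm{tr}_{\tilde g}(A^{\underline u}|_\Sigma) \leq 0$ on $\Sigma$, where $\tilde g := f^* g$.

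For the main computation I would rewrite the Gauss equation on $(\Sigma,\tilde g)$, using the definition of the Schouten tensor, as $R_{\tilde g} = 2(n-2)\,\mathrm{tr}_{\tilde g}(A^{\underline u}|_\Sigma) + H_g^2 - |h_g|^2$; combined with $H_g = 0$ and the lemma this yields $R_{\tilde g} \leq -|h_g|^2$. Minimality also forces $h_g$ to be traceless, so the conformal invariance of the trace-free second fundamental form gives $|h_g|^2 = \bar u^{-4/(n-2)} |\mathring h_0|^2$, where $\mathring h_0$ denotes the Euclidean trace-free second fundamental form of $f$. Substituting this, applying the standard conformal-change formula for the scalar curvature of the $(n-1)$-manifold $(\Sigma, \tilde g = \bar u^{4/(n-2)} \tilde g_0)$, multiplying by $\bar u^2$, and integrating by parts over compact $\Sigma$ against $d\tilde g_0$ produces
\[
\int_\Sigma (R_{\tilde g_0} + |\mathring h_0|^2)\,\bar u^2 \, d\tilde g_0 + \frac{4(n-1)}{n-2} \int_\Sigma |\bar\nabla \bar u|^2 \, d\tilde g_0 \leq 0.
\]
The Euclidean Gauss equation $R_{\tilde g_0} = H_0^2 - |h_0|^2$ together with $|h_0|^2 = |\mathring h_0|^2 + H_0^2/(n-1)$ collapses $R_{\tilde g_0} + |\mathring h_0|^2$ into $\tfrac{n-2}{n-1} H_0^2 \geq 0$, making both integrands manifestly non-negative; each therefore vanishes, and in particular $H_0 \equiv 0$, contradicting the Euclidean closed-minimality obstruction stated at the outset. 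The main hazard I anticipate is the bookkeeping of the two conformal metrics $\tilde g$ and $\tilde g_0$ on $\Sigma$ when combining Gauss with the conformal change of $R$, together with verifying the coefficient identity that produces the crucial cancellation $R_{\tilde g_0} + |\mathring h_0|^2 = \tfrac{n-2}{n-1}H_0^2$. The full $\bar\Gamma_2$ hypothesis is essential here: the weaker $\sigma_1(-A^{\underline u}) \geq 0$ alone would give only $\mathrm{tr}(A^{\underline u}) \leq 0$ rather than the tangential-trace bound, and the integrated identity would no longer separate into two non-negative pieces.
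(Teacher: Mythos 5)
Your argument is correct, and it departs from the paper's in an instructive way. The shared core is the algebraic lemma that a symmetric matrix with eigenvalues in $\bar\Gamma_2$ has non-negative partial trace over every codimension-one subspace (this is Lemma \ref{Lem:08IV20-L1}; the paper quotes $\lambda_1+\cdots+\lambda_{n-1}\ge 0$ as well known, whereas your Newton--MacLaurin computation actually proves it), and the shared endgame is to integrate a pointwise inequality over the compact $\Sigma$, conclude $H_0\equiv 0$ and $\bar u\equiv\mathrm{const}$, and invoke the nonexistence of compact minimal hypersurfaces in $\RR^n$. The middle step differs genuinely. The paper applies the partial-trace lemma directly to the explicit expression for $A^{\underline u}$ and converts the result, via the conformal transformation law of the mean curvature, into a differential inequality for $\tilde u=\underline u\circ f$ on $(\Sigma,f^*\mathring g)$; no intrinsic curvature of $\Sigma$ ever appears. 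You instead route through the intrinsic geometry of $\Sigma$: the Gauss equation rewritten with the tangential Schouten trace, the conformal invariance of the trace-free second fundamental form, and the conformal change of scalar curvature in dimension $n-1$. I checked your coefficients: the cancellation $R_{\tilde g_0}+|\mathring h_0|^2=\tfrac{n-2}{n-1}H_0^2$ and the gradient coefficient $\tfrac{4(n-1)}{n-2}$ are both right, and the conformal-change formula you need holds uniformly for $(n-1)$-manifolds with $n-1\ge 2$, so $n=3$ is not an exceptional case. The paper's route is computationally leaner; yours makes the Schoen--Yau-type rigidity mechanism, to which the paper only alludes, explicit. One notational caveat: your $\operatorname{tr}_{\tilde g}(A^{\underline u}|_\Sigma)$ conflates the matrix $A^{\underline u}$ with the $(0,2)$ Schouten tensor of $\underline u^{4/(n-2)}\mathring g$, which differ by the positive factor $\underline u^{-4/(n-2)}$; the displayed Gauss identity is exact only for the latter, but since only the sign of this term enters your estimate, nothing breaks.
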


Theorem \ref{Thm:AnnularNoSub} bears some resemblance to a result of Schoen and Yau \cite{SchoenYau-AnnM79} on a relation between positive scalar curvature and stable minimal surfaces.

Noting that when $u$ is radially symmetric, $\partial B_{r_0}$ is minimal with respect to $u^{\frac{4}{n-2}}\mathring{g}$ if and only if $\frac{d}{dr}\big|_{r = r_0} (r^{\frac{n-2}{2}} u(r)) = 0$, we obtain the following corollary with $r_0$ being a minimum point of $r^{\frac{n-2}{2}} u(r)$.

\begin{cor}
Suppose that $n \geq 3$. Let $\Omega = \{a < |x| < b\} \subset \RR^n$ with $0 < a < b < \infty$ be an annulus. Then there exists no radially symmetric positive function $\underline{u} \in C^2(\Omega)$ such that $\lambda(-A^{\underline{u}}) \in \bar\Gamma_2$ in $\Omega$ and $u(x) \rightarrow \infty$ as $x \rightarrow \partial \Omega$.
\end{cor}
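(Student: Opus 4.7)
The plan is to reduce the corollary to Theorem \ref{Thm:AnnularNoSub} by exhibiting a smooth compact minimal immersion into $(\Omega, \underline{u}^{\frac{4}{n-2}}\mathring{g})$. By the criterion recalled in the paragraph preceding the corollary, it suffices to find some $r_0 \in (a,b)$ at which $\frac{d}{dr}\bigl(r^{\frac{n-2}{2}}\,\underline{u}(r)\bigr)$ vanishes, since then $\partial B_{r_0}$, viewed via the inclusion $S^{n-1} \cong \partial B_{r_0} \hookrightarrow \Omega$, is a smooth compact minimal hypersurface in the conformal metric $\underline{u}^{\frac{4}{n-2}}\mathring{g}$.

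To produce such an $r_0$, set $h(r) := r^{\frac{n-2}{2}}\,\underline{u}(r)$ for $r \in (a,b)$, where we identify $\underline{u}$ with its radial profile. The function $h$ is $C^2$ and strictly positive on $(a,b)$. Because $r^{\frac{n-2}{2}}$ is bounded above and below by positive constants on $[a,b]$ while $\underline{u}(x) \to \infty$ as $x \to \partial\Omega$, one has $h(r) \to \infty$ as $r \to a^+$ and as $r \to b^-$. Hence $h$ attains its minimum at some interior point $r_0 \in (a,b)$, where $h'(r_0) = 0$.

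With this $r_0$, Theorem \ref{Thm:AnnularNoSub}, applied to the annulus $\Omega$, the function $\underline{u}$, and the inclusion $f\colon S^{n-1} \cong \partial B_{r_0} \hookrightarrow \Omega$, yields the desired contradiction. The argument involves no genuine obstacle: the sole ingredients are the boundary blow-up of $\underline{u}$, which forces an interior minimum of $h$, together with the standard conformal change-of-mean-curvature formula underlying the criterion quoted in the excerpt, and Theorem \ref{Thm:AnnularNoSub} itself. If anything, the only point to double-check is the smoothness of the immersion, which is automatic since $\partial B_{r_0}$ is a smooth round sphere in $\Omega$.
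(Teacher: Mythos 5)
Your proof is correct and follows exactly the route the paper indicates: the boundary blow-up forces an interior minimum $r_0$ of $r^{\frac{n-2}{2}}\underline{u}(r)$, at which $\partial B_{r_0}$ is minimal for the conformal metric, and Theorem \ref{Thm:AnnularNoSub} then gives the contradiction. No issues.
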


Our next result shows that the locally Lipschitz solution $u$ is not $C^1$.

\begin{thm}\label{thm:MainLipNeg}
Suppose that $n \geq 3$ and $2 \leq k \leq n$.
Let $\Omega = \{a < |x| < b\} \subset \RR^n$ with $0 < a < b < \infty$ be an annulus and $u$ be the unique locally Lipschitz viscosity solution to \eqref{Eq:X1}-\eqref{Eq:X1BC}. Then $u$ is radially symmetric, i.e. $u(x) = u(|x|)$, 
\begin{enumerate}[(i)]
\item $u$ is smooth in each of $\{a < |x| < \sqrt{ab}\}$ and $\{\sqrt{ab} < |x| < b\}$,
\item $u$ is $C^{1,\frac{1}{k}}$ but not $C^{1,\gamma}$ with $\gamma > \frac{1}{k}$ in each of $\{a < |x| \le \sqrt{ab}\}$ and $\{\sqrt{ab} \leq |x| < b\}$,
\item and the first radial derivative $\partial_r u$ jumps across $\{|x| = \sqrt{ab}\}$:
\[
\partial_r \ln u\big|_{r = \sqrt{ab}^-} = -\frac{n-2}{\sqrt{ab}} \text{ and } \partial_r \ln u\big|_{r = \sqrt{ab}^+} = 0.
\]
\end{enumerate}
\end{thm}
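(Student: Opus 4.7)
We reduce the problem to an ODE on the cylinder via radial and Kelvin symmetries and analyze the behavior at the unique possible non-smooth sphere $\{|x|=\sqrt{ab}\}$.

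\emph{Symmetries.} Radial symmetry of $u$ follows from uniqueness of the Lipschitz viscosity solution applied to $x\mapsto u(Rx)$ for each orthogonal $R$. Setting $t=\ln r$, $I:=(\ln a,\ln b)$, $t_0:=\ln\sqrt{ab}$ and $w(t):=r^{(n-2)/2}u(r)$, the metric $u^{4/(n-2)}\mathring g$ corresponds to $w^{4/(n-2)}(dt^2+g_{S^{n-1}})$ on $I\times S^{n-1}$. The Kelvin transform $x\mapsto ab\,x/|x|^2$ maps $\Omega$ to itself while preserving both the equation and the blow-up boundary data; uniqueness then forces $(ab/|x|^2)^{(n-2)/2}u(abx/|x|^2)=u(x)$, equivalently $w(t_0+s)=w(t_0-s)$. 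Thus $w$ is a positive Lipschitz function on $I$, symmetric about $t_0$, blowing up at the endpoints.

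\emph{ODE and smoothness.} A direct conformal-change computation gives, with $z:=(2/(n-2))\,w'/w$,
\begin{equation*}
\mu_1=\tfrac12 w^{-4/(n-2)}(1-2z^2+2z'),\qquad \mu_2=\cdots=\mu_n=\tfrac12 w^{-4/(n-2)}(z^2-1),
\end{equation*}
where $\lambda(-A^u)=(\mu_1,\mu_2,\ldots,\mu_n)$. For $k\geq 2$, $\lambda\in\Gamma_k$ forces $\mu_2>0$, i.e.\ $|z|>1$, and the $\sigma_k$ equation reduces to the first-order ODE
\begin{equation*}
(z^2-1)^{k-1}\bigl[\alpha_{n,k}+\beta_{n,k}z^2+2\tbinom{n-1}{k-1}z'\bigr]=\tbinom{n}{k}w^{4k/(n-2)},
\end{equation*}
with $\alpha_{n,k}=\binom{n-1}{k-1}-\binom{n-1}{k}$ and $\beta_{n,k}=\binom{n-1}{k}-2\binom{n-1}{k-1}$. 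Where $|z|$ is bounded away from $1$ the equation is non-degenerate and standard $\sigma_k$-interior regularity in $\Gamma_k$, combined with ODE bootstrapping, gives $u\in C^\infty$ on each open half-annulus. By Theorem~\ref{Thm:AnnularNoSub} (no $C^2$ subsolution admitting a minimal sphere), $u$ cannot be smooth on any open neighborhood of a sphere where $w'=0$; by Kelvin symmetry the only candidate such sphere is $\{r=\sqrt{ab}\}$, yielding (i).

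\emph{Jump at $t_0$ and sharp regularity.} If $u$ were $C^1$ across $\{r=\sqrt{ab}\}$, Kelvin symmetry would force $w'(t_0)=0$, i.e.\ $z(t_0)=0\in(-1,1)$, contradicting $|z|>1$. So $z$ has distinct one-sided limits $z^\pm:=z(t_0^\pm)$ with $z^+\geq 1$, $z^-\leq -1$ and $z^+=-z^-$. Writing $\epsilon:=z-1>0$ on the right, the bracket in the ODE at $z=1$ simplifies to $\binom{n-1}{k-1}(2z'-1)$, and $z^2-1=\epsilon(2+\epsilon)$, so leading-order balance as $t\to t_0^+$ yields
\begin{equation*}
2^k\tbinom{n-1}{k-1}\epsilon^{k-1}\epsilon'=\tbinom{n}{k}w(t_0)^{4k/(n-2)}+o(1),
\end{equation*}
which integrates to $\epsilon(t)^k=(n/2^k)\,w(t_0)^{4k/(n-2)}(t-t_0)+o(t-t_0)$. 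In particular $z^+=1$, and symmetrically $z^-=-1$. Since $\partial_r\ln u=(n-2)(z-1)/(2r)$, this produces the jump values in (iii). The expansion $\epsilon(t)=c(t-t_0)^{1/k}+o((t-t_0)^{1/k})$ with $c>0$ together with the smooth substitution $t-t_0=\ln(r/\sqrt{ab})$ translates into $\partial_r u(r)\sim c'(r-\sqrt{ab})^{1/k}$ from the right, showing that $u$ is $C^{1,1/k}$ but not $C^{1,\gamma}$ for any $\gamma>1/k$ up to the sphere; Kelvin symmetry handles the left side, completing (ii).

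\emph{Main obstacle.} The delicate step is the sharp asymptotic $\epsilon\sim c(t-t_0)^{1/k}$ with $c\neq 0$ at the degenerate boundary $|z|=1$ of $\Gamma_k$: one must extract the exact balance $(z^2-1)^{k-1}z'\to\text{const}>0$ and rule out faster decay, which would improve regularity. A secondary subtlety is promoting the a priori Lipschitz viscosity solution to $C^\infty$ on each open half-annulus, for which non-degeneracy ($\mu_2>0$) and the ODE reduction are essential. The Kelvin symmetry keeps the whole analysis one-sided.
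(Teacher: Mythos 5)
Your ODE analysis at the gluing sphere reproduces, in substance, the asymptotics behind the paper's Lemma \ref{Lem:KeyLem} (your integrated expansion $\epsilon(t)^k\sim \tfrac{n}{2^k}w(t_0)^{4k/(n-2)}(t-t_0)$ is the same $(t-t_0)^{1/k}$ behaviour that gives $C^{1,1/k}$ and no better), and the jump values in (iii) come out correctly. However, the logical architecture has a genuine gap: you analyze the \emph{given} Lipschitz viscosity solution directly and assert that it solves the first-order ODE classically on each open half-annulus, citing ``standard $\sigma_k$-interior regularity where $|z|$ is bounded away from $1$.'' No such regularity theory exists for merely Lipschitz viscosity solutions of these degenerate equations --- indeed the statement is circular, since to locate the set where $|z|>1$ you already need $u$ to be differentiable, and the pointwise identity \eqref{Eq:sigmakExpr} requires two derivatives. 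This is exactly the difficulty the paper's proof is built to avoid: it \emph{constructs} an explicit radial candidate from the ODE (Lemma \ref{Lem:KeyLem}), verifies by hand that the glued function is a viscosity solution of \eqref{Eq:X1} including at $|x|=\sqrt{ab}$, and only then identifies it with $u$ via the comparison principle. Relatedly, Theorem \ref{Thm:AnnularNoSub} rules out a global $C^2$ metric admitting a minimal hypersurface; it does not localize the singular set of $u$ to the single sphere $\{r=\sqrt{ab}\}$, so ``the only candidate such sphere'' is not justified as written.

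The second gap is the identification $z^+=1$. Your leading-order balance presupposes $\epsilon=z-1\to 0^+$; if instead $z^\pm=\pm c$ with $c>1$, the ODE is uniformly non-degenerate up to $t_0$ from each side, $z$ extends continuously to $t_0$ with $|z(t_0^\pm)|=c$, and neither the ODE nor the Kelvin symmetry forbids such a wider kink. What excludes it is the viscosity super-solution property at the corner: for a kink with $c>1$ one can touch $u$ from below at $x_0$ by a radial $C^2$ function $\bar\varphi$ whose associated $\bar\xi$ satisfies $1<|\bar\xi'(0)|<c$ with $\bar\xi''(0)$ as negative as one likes, and then \eqref{Eq:sigmakExpr} gives $\lambda(-A^{\bar\varphi}(x_0))\in\Gamma_k$ with $\sigma_k$ arbitrarily large, contradicting $\lambda(-A^{\bar\varphi}(x_0))\in\underline{S}_k$. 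This corner analysis --- which in the paper also has to handle \emph{non-radial} test functions, via the reflection matrix $O$ and the convexity of the cone of matrices with eigenvalues in $\Gamma_k$ --- is the real content of the theorem and is entirely missing from your proposal. (A minor point: your coefficient $\beta_{n,k}$ disagrees with what \eqref{Eq:sigmakExpr} yields, namely $\binom{n-1}{k}-\binom{n-1}{k-1}$; this does not affect the leading-order asymptotics but should be rechecked.)
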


A related problem in manifold settings is to solve on a given closed Riemannian manifold $(M,g)$ the equation
\begin{equation}
\sigma_k\Big(\lambda\Big(-A_{u^{\frac{4}{n-2}}g}\Big)\Big) 
	= 2^{-k} \Big(\begin{array}{c}n\\k\end{array}\Big), \quad \lambda\Big(-A_{u^{\frac{4}{n-2}}g}\Big) \in \Gamma_k, \quad u  > 0 \quad  \text{ in } M,
		\label{Eq:X1Mnfd}
\end{equation}
where $A_{u^{\frac{4}{n-2}}g}$ is the so-called Schouten tensor of the metric $u^{\frac{4}{n-2}}g$,
\[
A_{u^{\frac{4}{n-2}}g} = -\frac{2}{n-2} u^{-1}\,\nabla_g^2 u + \frac{2n}{(n-2)^2} u^{-2} d u \otimes d u - \frac{2}{(n-2)^2}\,u^{-\frac{2n}{n-2}}\,|\nabla_g u|_g^2\,g + A_g,
\]
and where $\lambda\big(-A_{u^{\frac{4}{n-2}}g}\big)$ is the eigenvalue of $-A_{u^{\frac{4}{n-2}}g}$ with respect to the metric $u^{\frac{4}{n-2}}g$. Equations \eqref{Eq:X1} and \eqref{Eq:X1Mnfd} are fully non-linear and non-uniformly elliptic equations of Hessian type, usually referred to as the $\sigma_k$-Yamabe equation in the `negative case', which is a generalization of the Loewner-Nirenberg problem \cite{LoewnerNirenberg}. This equation and its variants have been studied in Chang, Han and Yang \cite{C-H-Y}, Gonzalez, Li and Nguyen \cite{GonLiNg}, Gurksy and Viaclovsky \cite{Gursky-Viaclovsky:negative-curvature}, Li and Sheng \cite{Li-Sheng:flow}, Guan \cite{Guan:negative-Ricci}, Gursky, Streets and Warren \cite{Gursky-Streets-Warren}, and Sui \cite{Sui}. For further studies on the counterpart of \eqref{Eq:X1} in the positive case, see \cite{CGY02-AnnM, GeWang06, GW03-IMRN, GV07, LiLi03,LiLi05, Li09-CPAM, LiNgPoorMan, STW07, TW09, Viac00-Duke} and the references therein.

We observe the following result, which is essentially due to Gursky and Viaclovsky \cite{Gursky-Viaclovsky:negative-curvature}. We provide in the appendix the detail for the piece which is not directly available from \cite{Gursky-Viaclovsky:negative-curvature}.

\begin{thm}
\label{Thm:GV}
Suppose that $n \geq 3$, $2 \leq k \leq n$, and $(M^n,g)$ is a compact Riemannian manifold. If $\lambda(-A_g) \in \Gamma_k$ on $M$, then \eqref{Eq:X1Mnfd} has a Lipschitz viscosity solution.
\end{thm}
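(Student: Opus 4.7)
The plan is to deduce Theorem~\ref{Thm:GV} from the existence result of Gursky and Viaclovsky \cite{Gursky-Viaclovsky:negative-curvature} for the negative $\sigma_k$-Yamabe problem on closed manifolds, together with a constant rescaling to adjust the right-hand side to the normalization $2^{-k}\binom{n}{k}$ used in \eqref{Eq:X1Mnfd}, and a verification that the resulting smooth metric is a viscosity solution in the sense of Definition~\ref{Def:ViscositySolution}.

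\textbf{Step 1 (Existence of a smooth admissible metric).} Under the hypothesis $\lambda(-A_g)\in\Gamma_k$ on $M$, I will invoke the main theorem of Gursky--Viaclovsky, which produces a smooth positive function $u_0\in C^\infty(M)$ and a constant $c_0>0$ such that $\tilde g:=u_0^{4/(n-2)}g$ satisfies
\[
\sigma_k\big(\lambda(-A_{\tilde g})\big)=c_0,\qquad \lambda(-A_{\tilde g})\in\Gamma_k \text{ on } M.
\]
Their argument proceeds via a continuity method in the class of admissible conformal factors, supported by \textit{a priori} $C^0$ and $C^2$ estimates together with the openness/closedness consequences of the ellipticity of $\sigma_k^{1/k}$ on the cone $\Gamma_k$.

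\textbf{Step 2 (Normalization by scaling).} For a constant $\alpha>0$, set $u=\alpha u_0$. As a $(0,2)$ tensor, $A_{u^{4/(n-2)}g}=A_{\tilde g}$ is unchanged under a constant conformal rescaling; however, the eigenvalues with respect to $u^{4/(n-2)}g=\alpha^{4/(n-2)}\tilde g$ are rescaled by a factor $\alpha^{-4/(n-2)}$, so that
\[
\sigma_k\big(\lambda(-A_{u^{4/(n-2)}g})\big)=\alpha^{-4k/(n-2)}\,c_0.
\]
Choosing $\alpha$ so that $\alpha^{-4k/(n-2)}c_0=2^{-k}\binom{n}{k}$ yields a smooth $u\in C^\infty(M)$ solving \eqref{Eq:X1Mnfd} classically with $\lambda(-A_{u^{4/(n-2)}g})\in\Gamma_k$.

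\textbf{Step 3 (Viscosity and Lipschitz properties).} Since $u$ is smooth on the compact manifold $M$, it is globally Lipschitz. Moreover, for any test function $\varphi\in C^2$ touching $u$ from above (resp.\ below) at $x_0$, the usual first-/second-order derivative comparison forces $-A^\varphi(x_0)\in\overline{S}_k$ (resp.\ $\underline{S}_k$) because $-A^u(x_0)$ already lies on $\{\sigma_k=2^{-k}\binom{n}{k}\}\cap\Gamma_k$; hence $u$ is both a viscosity sub- and super-solution.

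The only substantive obstacle — presumably the piece expanded in the appendix — is reconciling the precise normalization and cone convention used in \cite{Gursky-Viaclovsky:negative-curvature} with those of the present paper. In particular, I expect the appendix to make explicit the scaling in Step~2 and to verify that Gursky--Viaclovsky's admissibility condition matches $\lambda(-A_g)\in\Gamma_k$ as defined here, so that their existence theorem applies verbatim and yields the Lipschitz viscosity solution claimed in Theorem~\ref{Thm:GV}.
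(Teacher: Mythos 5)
There is a genuine gap, and it sits exactly where you wave it away at the end of your write-up: Step 1 attributes to Gursky--Viaclovsky a result they do not prove. For $2 \le k \le n$ in the negative case, \cite{Gursky-Viaclovsky:negative-curvature} does \emph{not} produce a smooth conformal metric $\tilde g$ with $\sigma_k(\lambda(-A_{\tilde g}))$ constant and $\lambda(-A_{\tilde g}) \in \Gamma_k$. What their Theorem 1.4 solves is the equation for the \emph{perturbed} tensor $A^\tau_{g_u} = A_{g_u} + \tau\, \mathrm{tr}_{g_u}(A_{g_u})\, g_u$ with $\tau > 0$; the perturbation is precisely what makes the $C^2$ a priori estimates go through, and those estimates degenerate as $\tau \to 0$. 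No $C^2$ estimate is available for the unperturbed equation in the negative cone --- the present paper's main theorems exhibit viscosity solutions of \eqref{Eq:X1} that fail to be $C^{1,\gamma}$ for every $\gamma > \frac{1}{k}$, and Question \ref{Q:1IV20-Q1} records that $C^2$ regularity of solutions of \eqref{Eq:X1Mnfd} on closed manifolds is open. If your Step 1 were correct, Theorem \ref{Thm:GV} would deliver a smooth solution outright and there would be nothing left for the appendix to do; the theorem is stated for Lipschitz viscosity solutions precisely because smoothness is not known.

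The argument the paper actually runs is: by \cite[Theorem 1.4]{Gursky-Viaclovsky:negative-curvature} one obtains smooth solutions $u_\tau$ of the $\tau$-perturbed problem for all small $\tau > 0$, and by \cite[Propositions 3.2 and 4.1]{Gursky-Viaclovsky:negative-curvature} the family $\{u_\tau\}$ is bounded in $C^1(M)$ uniformly as $\tau \to 0$ (while the $C^2$ bounds are not uniform). One extracts a uniform limit $u \in C^{0,1}(M)$ along a sequence $\tau_i \to 0$, and the substantive step --- the entire content of the appendix --- is the stability argument showing that $u$ is a viscosity solution of \eqref{Eq:X1Mnfd}: given a $C^2$ test function $\varphi$ touching $u$ from above (resp.\ below) at $\bar x$, perturb to $\varphi \pm \delta\, d_g(\cdot,\bar x)^2$, locate interior extremum points $x_{i,\delta}$ of $\varphi_\delta - u_{\tau_i}$, exploit the classical perturbed equation satisfied by $u_{\tau_i}$ there together with the first- and second-order comparison, and pass $i \to \infty$ and then $\delta \to 0$. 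Your Step 3 correctly observes that a classical solution is a viscosity solution, but the object to which you apply it is not supplied by any cited result; the limit passage above is what must replace it. (Your Step 2, normalization by a constant rescaling, is harmless but moot.)
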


Here viscosity solution is defined analogously as in Definition \ref{Def:ViscositySolution}.

\begin{Def}\label{Def:ViscositySolutionMnfd}
Let $(M^n,g)$ be a Riemannian manifold, $1 \leq k \leq n$, and $\overline{S}_k$ and $\underline{S}_k$ be given by \eqref{Eq:oSkDef} and \eqref{Eq:uSkDef}. We say that an upper semi-continuous (a lower semi-continuous) function $u: M \rightarrow (0,\infty)$ is a sub-solution (super-solution) to \eqref{Eq:X1Mnfd} in the viscosity sense, if for any $x_{0}\in M$, $\varphi\in C^{2}(M)$ satisfying $(u-\varphi)(x_{0})=0$ and $u-\varphi\leq0$ $(u-\varphi\geq0)$ near $x_{0}$,
there holds
\[
\lambda\Big(-A_{\varphi^{\frac{4}{n-2}}g}(x_{0})\Big)\in \overline{S}_k
\qquad
 \left(\lambda\Big(-A_{\varphi^{\frac{4}{n-2}}g}(x_{0})\Big) \in \underline{S}_k, \text{respectively}\right).
\]

We say that a positive function $u \in C^0(M)$ satisfies \eqref{Eq:X1Mnfd} in the viscosity sense if it is both a sub- and a super-solution to \eqref{Eq:X1Mnfd} in the viscosity sense.
\end{Def}

In both contexts, it is an interesting open problem to understand relevant conditions on $\Omega$, or on $(M,g)$, which would ensure that \eqref{Eq:X1}-\eqref{Eq:X1BC}, or \eqref{Eq:X1Mnfd} respectively, admits a smooth solution. We make the following conjecture.

\begin{conj}\label{C:11IV20-C1}
Suppose that $n \geq 3$, $2 \leq k \leq n$, and $\Omega \subset \RR^n$ is a bounded smooth domain. Then the locally Lipschitz viscosity solution to \eqref{Eq:X1}-\eqref{Eq:X1BC} is smooth near $\partial\Omega$. 
\end{conj}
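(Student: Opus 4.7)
The plan is to prove smoothness of $u$ in a one-sided collar neighborhood of $\partial\Omega$ via a refined boundary asymptotic analysis. In such a collar I would introduce semi-geodesic coordinates $(x', d)$ with $d(x) = \mathrm{dist}(x, \partial\Omega)$ and $x' \in \partial\Omega$ the nearest boundary point, and make the substitution $u = d^{-(n-2)/2}\,v$. By \eqref{Eq:08IV20-E1}, $v$ extends continuously to $\partial\Omega$ with $v|_{\partial\Omega} \equiv C(n,k)$. Moreover, one can check that $A^u$ takes the form $d^{-2}$ times a tensor whose leading part at the boundary is independent of $x'$ and whose eigenvalues lie strictly inside $\Gamma_k$. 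Thus \eqref{Eq:X1} should transform into a fully nonlinear equation for $v$ that is uniformly elliptic on a neighborhood of the constant profile $v \equiv C(n,k)$, with a Fuchsian (Bessel-type) radial part in $d$.

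First I would sharpen \eqref{Eq:08IV20-E1} to the quantitative form $u(x) = d(x)^{-(n-2)/2}\bigl[C(n,k) + O(d(x))\bigr]$ by constructing two-sided barriers
\[
d^{-(n-2)/2}\bigl[C(n,k) \pm \varepsilon + \alpha_\pm(x')\,d\bigr],
\]
with coefficients tied to the principal curvatures of $\partial\Omega$ at $x'$, and applying the comparison principle quoted after Definition \ref{Def:ViscositySolution}. Iterating the barrier construction with higher-order polynomial corrections in $d$ would formally produce a polyhomogeneous expansion
\[
v(x) \sim C(n,k) + \sum_{j \geq 1} a_j(x')\, d^j
\]
in which the coefficients $a_j \in C^\infty(\partial\Omega)$ are determined recursively from the transformed equation and the geometry of $\partial\Omega$.

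To upgrade this formal expansion to genuine smoothness, I would linearize the transformed equation at the constant profile $v \equiv C(n,k)$ and, using the concavity of $\sigma_k^{1/k}$ on $\Gamma_k$, appeal to boundary regularity theory for fully nonlinear elliptic equations (in the spirit of Krylov, Caffarelli, and Savin) to establish $C^{2,\alpha}$ estimates for $v$ up to $\partial\Omega$. A standard bootstrap, combined with the viscosity uniqueness for \eqref{Eq:X1}-\eqref{Eq:X1BC}, would then promote this to $v \in C^\infty$ up to $\partial\Omega$, and hence $u \in C^\infty(\{0 < d < \delta\})$ for some $\delta > 0$.

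The hard part will be the indicial analysis of the Fuchsian radial operator in $d$: one must verify that its indicial roots are such that no logarithmic terms appear in the formal expansion, since such terms would obstruct $C^\infty$ while remaining compatible with finite Hölder regularity. A related difficulty is that the reduction introduces coefficients depending non-trivially on the principal curvatures of $\partial\Omega$, so the Fuchsian structure is genuinely $(n{-}1)$-parametric and not reducible to a fixed half-space model. Finally, Theorem \ref{thm:MainLipNeg} shows that this strategy can succeed only on one-sided collar neighborhoods of $\partial\Omega$: in the annular case interior smoothness fails because $\partial_r u$ jumps at $|x| = \sqrt{ab}$, so the argument must use essentially the separation between $\partial\Omega$ and any potential interior singular locus of $u$.
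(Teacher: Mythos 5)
The statement you are addressing is not a theorem of the paper: it is stated there explicitly as an open conjecture, and the authors offer no proof of it. So there is nothing in the paper to compare your argument against, and the only question is whether your proposal actually closes the problem. It does not. What you have written is a plausible research program --- essentially the strategy that works for $k=1$ (Loewner--Nirenberg, Andersson--Chru\'sciel--Friedrich, Mazzeo), transplanted to $k\geq 2$ --- but every load-bearing step is phrased conditionally (``should transform into'', ``would formally produce'', ``would then promote''), and the steps left open are precisely where the difficulty of the conjecture lives.

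Concretely: (i) your claim that the equation for $v=d^{(n-2)/2}u$ is ``uniformly elliptic on a neighborhood of the constant profile'' is unproved and is essentially the whole problem. The solution is a priori only locally Lipschitz; to linearize, or to invoke Krylov--Caffarelli--Savin boundary regularity, you first need $u\in C^{2}$ in a collar with $\lambda(-A^{u})$ confined to a compact subset of $\Gamma_k$, and no such a priori estimate is known for $k\geq2$ (the equation is degenerate elliptic precisely where $\lambda$ approaches $\partial\Gamma_k$, which is how the singularity in Theorem \ref{thm:MainLipNeg} forms). (ii) The two-sided barriers $d^{-(n-2)/2}[C(n,k)\pm\varepsilon+\alpha_\pm(x')d]$ require an actual verification that $\lambda(-A^{\,\cdot})$ stays in $\Gamma_k$ and that $\sigma_k$ lands on the correct side of the constant; for $k\geq2$ this is a genuine fully nonlinear computation involving the second fundamental form of $\partial\Omega$, and you have not supplied it. Even granting it, such barriers give only a rate $u=d^{-(n-2)/2}(C(n,k)+O(d))$, i.e.\ $C^{0}$-type information, not the $C^{2}$ control needed to start a bootstrap. (iii) Most importantly, nothing in your argument excludes the singular mechanism exhibited by the paper's own Theorem \ref{thm:MainLipNeg} from occurring arbitrarily close to $\partial\Omega$: the jump in the gradient there is produced by the solution reaching the boundary of the admissible cone, and your proposal contains no quantitative estimate keeping $\lambda(-A^{u})$ away from $\partial\Gamma_k$ in a collar. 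Until that is established, the formal polyhomogeneous expansion and the indicial analysis are expansions of a solution you have not yet shown to be regular enough to expand. The conjecture remains open.
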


Some further questions are in order.

\begin{question}\label{Q:7IV20-QA}
Suppose that $n \geq 3$, $2 \leq k \leq n$, and $\Omega \subset \RR^n$ is a bounded smooth domain. If \eqref{Eq:X1}-\eqref{Eq:X1BC} has a smooth sub-solution, must \eqref{Eq:X1}-\eqref{Eq:X1BC} have a smooth solution? 
\end{question}

\begin{question}
Suppose that $n \geq 3$, $2 \leq k \leq n$, and $\Omega \subset \RR^n$ is a smooth strictly convex (non-empty) domain. Is the locally Lipschitz viscosity solution to \eqref{Eq:X1}-\eqref{Eq:X1BC} smooth?
\end{question}

If $\Omega$ is a ball, then the solution to \eqref{Eq:X1}-\eqref{Eq:X1BC} is smooth and corresponds to the Poincar\'e metric.

\begin{question}\label{Q:09IVQ1.9}
Suppose that $n \geq 3$, $2 \leq k \leq n$, and $\Omega = \Omega_2 \setminus \bar \Omega_1 \neq \emptyset$ where $\Omega_1  \Subset \Omega_2 \subset \RR^n$ are smooth bounded strictly convex domains. Is the locally Lipschitz viscosity solution to \eqref{Eq:X1}-\eqref{Eq:X1BC} $C^2$?
\end{question}

In the case $\Omega_1$ and $\Omega_2$ are balls, $\Omega = \Omega_2 \setminus \Omega_1$ is conformally equivalent to an annulus, and so, by Theorem \ref{thm:MainLipNeg}, the solution to \eqref{Eq:X1}-\eqref{Eq:X1BC} is not $C^2$. We believe that the answer to the above question is negative. We indicate here how such statement may be proved. In view of Theorem \ref{Thm:AnnularNoSub}, it suffices to show that if $u$ is a $C^2$ solution to \eqref{Eq:X1}-\eqref{Eq:X1BC}, then $(\Omega, u^{\frac{4}{n-2}}\mathring{g})$ admits a smooth (immersed) minimal hypersurface. It is reasonable to expect, in view of known results in the case $k = 1$ (cf. \cite{Andersson-Chrusciel-Friedrich, Mazzeo:singular-Yamabe}) and estimate \eqref{Eq:08IV20-E1}, that
\[
d(x,\partial\Omega)\Big|\nabla \big(u(x) d(x,\partial\Omega)^{\frac{n-2}{2}}\big)\Big| \rightarrow 0 \text{ as } d(x,\partial\Omega) \rightarrow 0.
\]
If the above estimate holds for $k \geq 2$, one has that, for small $\delta > 0$, the hypersurfaces $X_\delta = \{x \in \Omega: d(x,\partial\Omega) = \delta\}$ are strictly mean-convex with respect to $u^{\frac{4}{n-2}}\mathring{g}$ and the normal pointing toward the region enclosed between these two hypersurfaces. These hypersurfaces can be used as barriers to construct a desired minimal hypersurface, at least for $n \leq 7$. For example, in dimension $n = 3$, a result of Meeks and Yau \cite[Theorem 7]{Meeks-Yau-AnnM80} (see also \cite[Theorem 4.2]{Hass-Scott-TrAMS88}) implies that there exists a conformal map $f: \SSphere^2 \rightarrow \Omega_\delta = \{x \in \Omega: d(x,\partial\Omega) > \delta\}$ which minimizes area among all homotopically nontrivial maps from $\SSphere^2$ into $\Omega_\delta$ and either $f$ is a conformal embedding or a double covering map whose image is an embedded projective plane. Since all compact surfaces in $\RR^3$ are orientable (see e.g. \cite{Samelson-ProcAMS69} or \cite[Corollary 3.46]{Hatcher}), $f$ is a conformal embedding and so $f(\SSphere^2)$ is an embedded minimal sphere in $(\Omega_\delta, u^4 \mathring{g})$. This will be followed up in a subsequent joint work with Jingang Xiong.

\begin{question}
Suppose that $n \geq 3$, $2 \leq k \leq n$, and $(M^n,g)$ is a Riemannian manifold such that $\lambda(-A_g) \in \Gamma_k$ on $M$. Does \eqref{Eq:X1Mnfd} have a unique Lipschitz viscosity solution?
\end{question}

It is clear that \eqref{Eq:X1Mnfd} has at most one $C^2$ solution by the maximum principle. In fact, if \eqref{Eq:X1Mnfd} has a $C^2$ solution, then that solution is also the unique continuous viscosity solution in view of the strong maximum principle \cite[Theorem 3.1]{CafLiNir11}. Equivalently, if \eqref{Eq:X1Mnfd} has two viscosity solutions, then it has no $C^2$ solution.

\begin{question}\label{Q:1IV20-Q1}
Suppose that $n \geq 3$ and $2 \leq k \leq n$. Does there exist a Riemannian manifold $(M^n,g)$ such that $\lambda(-A_g) \in \Gamma_k$ on $M$ and \eqref{Eq:X1Mnfd} has a Lipschitz viscosity solution which is not $C^2$?
\end{question}

Finally, we discuss the case where \eqref{Eq:X1BC} is replaced by finite constant boundary conditions 
\begin{equation}
u|_{\{|x| = a\}} = \consta \text{ and }u|_{\{|x| = b\}} = \constb.
	\label{Eq:X1FiniteBC}
\end{equation}
We completely determine in the following theorem the regularity of the solution to \eqref{Eq:X1} and \eqref{Eq:X1FiniteBC} depending on whether $\ln \frac{b}{a}$ is larger, equal to, or smaller than $2T(a,b,\consta,\constb)$ where
\begin{equation}
T(a,b,\consta,\constb) := \frac{1}{2}\int_{-|p_b - p_a|}^{0}
	\Big\{1 + e^{-2\eta - 2\max(p_a,p_b)}\big[1 - e^{n\eta}\big]^{1/k}\Big\}^{-1/2}\,d\eta,
	\label{Eq:Tabcab}
\end{equation}
$p_a =- \frac{2}{n-2}\ln \consta - \ln a$ and $p_b =- \frac{2}{n-2}\ln \constb - \ln b$.

\begin{thm}\label{thm:MainLipNegCab}
Suppose that $n \geq 3$ and $2 \leq k \leq n$. Let $\Omega = \{a < |x| < b\} \subset \RR^n$ with $0 < a < b < \infty$ be an annulus, and $\consta, \constb$ be two positive constants and let $T(a,b,\consta,\constb)$ be given by \eqref{Eq:Tabcab}. Then there exists a unique continuous viscosity solution to \eqref{Eq:X1} and \eqref{Eq:X1FiniteBC}. Furthermore, $u$ is radially symmetric, i.e. $u(x) = u(|x|)$, and exactly one of the following four alternatives holds.
\begin{enumerate}[{Case} 1:]
\item $\ln \frac{b}{a} < 2T(a,b,\consta,\constb)$, and $u$ is smooth in $\{a \leq |x| \leq b\}$,
\item $\ln \frac{b}{a} = 2T(a,b,\consta,\constb)$, $b^{\frac{n-2}{2}}\constb < a^{\frac{n-2}{2}}\consta$, and $u$ is smooth in $\{a \leq |x| < b\}$, is $C^{1,\frac{1}{k}}$ but not $C^{1,\gamma}$ with $\gamma > \frac{1}{k}$ in $\{a \leq |x| \le b\}$,
\item $\ln \frac{b}{a} = 2T(a,b,\consta,\constb)$, $b^{\frac{n-2}{2}}\constb > a^{\frac{n-2}{2}}\consta$, and $u$ is smooth in $\{a < |x| \leq b\}$, is $C^{1,\frac{1}{k}}$ but not $C^{1,\gamma}$ with $\gamma > \frac{1}{k}$ in $\{a \leq |x| \le b\}$,

\item $\ln \frac{b}{a} > 2T(a,b,\consta,\constb)$, and there is some $m \in (a,b)$ such that
\begin{enumerate}[(i)]
\item $u$ is smooth in each of $\{a \leq |x| < m\}$ and $\{m < |x| \leq b\}$, 
\item $u$ is $C^{1,\frac{1}{k}}$ but not $C^{1,\gamma}$ with $\gamma > \frac{1}{k}$ in each of $\{a \leq |x| \le m\}$ and $\{m \leq |x| \leq b\}$, 
\item and the first radial derivative $\partial_r u$ jumps across $\{|x| = m\}$:
\[
\partial_r \ln u\big|_{r = m^-} = -\frac{n-2}{m} \text{ and } \partial_r \ln u\big|_{r = m^+} = 0.
\]
\end{enumerate}
\end{enumerate}
\end{thm}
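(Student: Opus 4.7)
The overall plan is to combine the comparison principle of \cite{GonLiNg} with an ODE analysis: radial symmetry reduces \eqref{Eq:X1}, \eqref{Eq:X1FiniteBC} to a second-order ODE on the cylinder, this ODE admits an explicit first integral, and the four cases are read off from the interaction between the phase portrait of that first integral and the endpoint data $(p_a,p_b)$ attached to $(a,b)$.

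Uniqueness of a continuous viscosity solution with prescribed finite boundary values follows from the comparison principle (Proposition 2.2 of \cite{GonLiNg}), and rotation invariance of both the equation and the boundary data then forces $u$ to be radial. Writing $t = \ln r$ and $p(t) = -\frac{2}{n-2}\ln u - t$, so that the given $p_a$, $p_b$ coincide with $p(\ln a)$, $p(\ln b)$, the Emden--Fowler conformal change to $\RR \times \SSphere^{n-1}$ yields
\[
\lambda_{\rm rad}(-A^u) = \tfrac{1}{2}e^{2p}\bigl[1 - (p')^2 - 2p''\bigr], \qquad \lambda_{\rm tan}(-A^u) = \tfrac{1}{2}e^{2p}\bigl[(p')^2 - 1\bigr],
\]
the tangential eigenvalue appearing with multiplicity $n-1$. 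Substituting into $\sigma_k = 2^{-k}\binom{n}{k}$, multiplying through by $[(p')^2-1]^{k-1}$, and viewing the result as a first-order linear ODE for $[(p')^2-1]^k$ in the independent variable $p$ (integrating factor $e^{-(n-2k)p}$) produces the first integral
\[
\bigl[(p')^2 - 1\bigr]^k = e^{-2kp} + C\, e^{(n-2k)p}, \qquad C \in \RR.
\]
The condition $\lambda(-A^u) \in \Gamma_k$ translates to $(p')^2 > 1$: for $C \geq 0$ this holds for all $p$ and the phase curve is a monotone graph, while for $C < 0$ the curve is confined to $p < p_C := -\frac{1}{n}\ln|C|$ and meets the cone boundary $\{(p')^2 = 1\}$ precisely at $p = p_C$.

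Parametrize candidate solutions by the peak $P \geq \max(p_a,p_b)$, corresponding to a critical-type trajectory with $C = -e^{-nP}$, and let the total traversal time be $\tau(P) = \int_{p_a}^P f(p,P)\,dp + \int_{p_b}^P f(p,P)\,dp$ with $f(p,P) = \{1 + e^{-2p}(1 - e^{n(p-P)})^{1/k}\}^{-1/2}$. The substitution $\eta = p - P$ in $\tau(\max(p_a,p_b))$ matches \eqref{Eq:Tabcab}, identifying $\tau(\max(p_a,p_b)) = 2T(a,b,\consta,\constb)$. A companion parametrization by $C > -e^{-n\max(p_a,p_b)}$ of smooth monotone trajectories connecting $p_a$ and $p_b$ gives a traversal time that ranges continuously from $|p_b - p_a|$ (as $C \to +\infty$) up to $2T$. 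Hence if $\ln(b/a) < 2T$, some smooth monotone trajectory realizes the endpoints, giving a smooth solution on the closed annulus (Case~1). If $\ln(b/a) = 2T$, the critical trajectory itself does the job; it is smooth in the interior but touches the cone boundary at precisely the endpoint where $p = \max(p_a,p_b)$, which is $r = b$ when $p_b > p_a$ (equivalently $b^{(n-2)/2}\constb < a^{(n-2)/2}\consta$, Case~2) and $r = a$ otherwise (Case~3). If $\ln(b/a) > 2T$, then since $\tau(P) \to +\infty$ as $P \to \infty$, the intermediate value theorem yields some $p^{**} > \max(p_a,p_b)$ with $\tau(p^{**}) = \ln(b/a)$, and two pieces of the trajectory $C = -e^{-np^{**}}$, one ascending from $p_a$ to $p^{**}$ and one descending from $p^{**}$ to $p_b$, are joined at the single radius $r = m$ to give the desired piecewise function (Case~4).

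Smoothness of $u$ away from the cone-touching radius is standard since the ODE is non-degenerate in $\{(p')^2 > 1\}$. Near the critical value $p^*$ (namely $\max(p_a,p_b)$ in Cases~2--3 and $p^{**}$ in Case~4) the first integral reads $[(p')^2-1]^k = e^{-2kp}(1 - e^{n(p-p^*)}) \sim n\, e^{-2kp^*}(p^*-p)$, hence $(p')^2 - 1 \sim c\,(p^*-p)^{1/k}$; integrating in $t$ gives $p(t) = p^* + p'(t_m^\pm)(t-t_m) + O(|t-t_m|^{1+1/k})$ with $p'(t_m^-) = +1$ and $p'(t_m^+) = -1$, and translating via $u = e^{-(n-2)(p+t)/2}$ produces the stated jump $\partial_r \ln u|_{m^-} = -(n-2)/m$, $\partial_r \ln u|_{m^+} = 0$ together with the sharp $C^{1,1/k}$ regularity (the exponent $1/k$ comes from the $k$-th root of a simple zero and cannot be improved). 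The main obstacle will be verifying that the patched function is in fact a viscosity solution at the corner radius in Case~4 (and analogously at $r=a$ or $r=b$ in Cases~2--3): by Definition~\ref{Def:ViscositySolution} one must show that no $C^2$ test function touching from above, respectively below, can have $\lambda(-A^\varphi)$ violating membership in $\overline{S}_k$, respectively $\underline{S}_k$, a check that uses crucially the one-sided values $p'(t_m^\pm) = \pm 1$ and the geometry of the $\Gamma_k$-cone; this step parallels the verification needed in Theorem~\ref{thm:MainLipNeg}. Uniqueness of $p^{**}$ and of the overall solution then follows from the comparison principle.
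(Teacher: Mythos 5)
Your proposal follows essentially the same route as the paper: uniqueness and radial symmetry via the comparison principle, the Emden--Fowler reduction to an ODE with first integral $[(p')^2-1]^k = e^{-2kp} + C\,e^{(n-2k)p}$, parametrization of the critical trajectories by the peak value $P$ (i.e.\ $C=-e^{-nP}$), an intermediate value argument on the traversal time to match $\ln\frac{b}{a}$, the $k$-th-root expansion at the cone-touching point giving the sharp $C^{1,\frac1k}$ regularity and the derivative jump, and deferral of the corner viscosity check to the reflection/cone-convexity argument of Theorem~\ref{thm:MainLipNeg} (the paper does exactly the same deferral). Your eigenvalue formulas, the identification $p_b>p_a \Leftrightarrow b^{\frac{n-2}{2}}\constb < a^{\frac{n-2}{2}}\consta$, and the translation $p'(t_m^\pm)=\pm1 \Rightarrow \partial_r\ln u|_{m^\mp}$ are all correct.

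One step is stated incorrectly, and as written it breaks the Case~1 argument. You claim the traversal time of the smooth monotone trajectories ``ranges continuously from $|p_b-p_a|$ (as $C\to+\infty$) up to $2T$''. But the integrand $\{1+e^{-2p}(1+Ce^{np})^{1/k}\}^{-1/2}$ is strictly less than $1$ on the trajectory, so \emph{every} traversal time, including the limiting value $2T(a,b,\consta,\constb)$, is strictly less than $|p_b-p_a|$; the interval you describe is empty. Moreover, as $C\to+\infty$ one has $(p')^2\to\infty$, so the traversal time tends to $0$, not to $|p_b-p_a|$. With your stated range the intermediate value theorem would not produce a trajectory realizing $\ln\frac{b}{a}$, since $\ln\frac{b}{a} < 2T(a,b,\consta,\constb) < |p_b-p_a|$. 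The correct statement is that the traversal time decreases continuously from $2T(a,b,\consta,\constb)$ (as $C\downarrow -e^{-n\max(p_a,p_b)}$) to $0$ (as $C\to+\infty$), which is how the paper finds $q_a$ in \eqref{Eq:qaDef}. With that correction, and with the corner verification actually carried out as in the proof of Theorem~\ref{thm:MainLipNeg} (you correctly identify the needed ingredients but do not execute the check), the proposal is complete.
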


Note that when $\ln \frac{b}{a} = 2T(a,b,\consta,\constb)$, we have in view of the definition of $T(a,b,\consta,\constb)$, $p_a$ and $p_b$ that $b^{\frac{n-2}{2}}\constb \neq a^{\frac{n-2}{2}}\consta$.

\begin{rem}\label{Rmk:C1=>Smooth}
It is clear from Theorem \ref{thm:MainLipNegCab} (in Cases 1--3) that if $u$ is a $C^1$ and radially symmetric solution to \eqref{Eq:X1} in the viscosity sense in some open annulus $\Omega$ then $u \in C^\infty(\Omega)$.
\end{rem}

\begin{rem}
In Case 4, the exact value of $m$ is
\[
m = \sqrt{ab} \exp\Big(\frac{1}{2}\int_{p_b- p}^{p_a - p} \Big\{1 + e^{-2\eta-2p}\big[1 - e^{n\eta}\big]^{1/k}\Big\}^{-1/2}\,d\eta\Big)
\]
where $p$ is the solution to
\begin{align*}
\ln\frac{b}{a} 
	&= \int_{p_b- p}^{0} \Big\{1 + e^{-2\eta-2p}\big[1 - e^{n\eta}\big]^{1/k}\Big\}^{-1/2}\,d\eta\\
		&\quad + \int_{p_a- p}^{0} \Big\{1 + e^{-2\eta-2p}\big[1 - e^{n\eta}\big]^{1/k}\Big\}^{-1/2}\,d\eta.
\end{align*}
\end{rem}

The following question is related to Question \ref{Q:7IV20-QA}.

\begin{question}
Suppose that $n \geq 3$, $2 \leq k \leq n$ and $\Omega = \{a < |x| < b\} \subset \RR^n$ with $0 < a < b < \infty$. Does there exist constants $\consta, \constb$ with $\ln \frac{b}{a} > 2T(a,b,\consta,\constb)$ such that the problem \eqref{Eq:X1} and \eqref{Eq:X1FiniteBC} has a smooth sub-solution?
\end{question}

Recall that by Theorem \ref{thm:MainLipNegCab}, when $\ln \frac{b}{a} > 2T(a,b,\consta,\constb)$, the problem \eqref{Eq:X1} and \eqref{Eq:X1FiniteBC} has no smooth solution.

For comparison, we recall here a result of Bo Guan \cite{Guan07-AJM} on the Dirichlet $\sigma_k$-Yamabe problem in the so-called positive case which states that the existence of a smooth sub-solution implies the existence of a smooth solution.

We conclude the introduction with one more question.

\begin{question}
Let $n \geq 3$, $2 \leq k \leq n$ and $m \neq n-1$. Does there exist a smooth domain $\Omega \subset \RR^n$ such that the locally Lipschitz solution to \eqref{Eq:X1}-\eqref{Eq:X1BC} is $C^2$ away from a set $\Sigma$ which has Hausdorff dimension $m$?  
\end{question}

In Section \ref{Sec:Proofs}, we prove all the results above except Theorem \ref{Thm:GV}, whose proof is done in the appendix. Theorem \ref{Thm:AnnularNoSub} is proved first in Subsection \ref{ssec:ANS}. We then prove a lemma on the existence and uniqueness a non-standard boundary value problem for the ODE related to \eqref{Eq:X1} in Subsection \ref{ssec:lemma} and use it to prove Theorem \ref{thm:MainLipNeg} in Subsection \ref{ssec:Main} and Theorem \ref{thm:MainLipNegCab} in Subsection \ref{ssec:Cab}.

\subsection*{Acknowledgment} The authors would like to thank Matt Gursky and Zheng-Chao Han for stimulating discussions. The authors are grateful to the referees for their very careful reading and useful comments.

\section{Proofs}\label{Sec:Proofs}

\subsection{Proof of Theorem \ref{Thm:AnnularNoSub}}
\label{ssec:ANS}

We will use the following lemma.

\begin{lem}\label{Lem:08IV20-L1}
For every symmetric $n \times n$ matrix $M$ with $\lambda(M) \in \bar\Gamma_2$ and every unit vector $m \in \RR^n$, it holds
\[
M_{ij}(\delta_{ij} - m_i m_j) \geq 0.
\]
\end{lem}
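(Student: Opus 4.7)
The plan is to first rewrite the left-hand side in coordinate-free form:
\[
M_{ij}(\delta_{ij} - m_i m_j) = \mathrm{tr}(M) - \langle M m, m\rangle = \sigma_1(\lambda(M)) - \langle Mm,m\rangle.
\]
Since $|m|=1$, the Rayleigh quotient bound gives $\langle Mm,m\rangle \leq \lambda_{\max}(M)$, so it suffices to prove the purely scalar inequality
\[
\sigma_1(\lambda) \geq \lambda_{\max} \quad \text{whenever } \lambda \in \bar\Gamma_2,
\]
i.e., that the sum of all eigenvalues except the largest is nonnegative. This reduction turns a matrix statement into a statement about $n$ real numbers and lets the defining inequalities $\sigma_1 \geq 0$, $\sigma_2 \geq 0$ of $\bar\Gamma_2$ enter directly.

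To establish the reduced inequality I would argue by contradiction. Order the eigenvalues $\lambda_1 \leq \cdots \leq \lambda_n$, set $\mu := \lambda_n$ and $S := \sum_{i<n}\lambda_i = \sigma_1(\lambda) - \mu$, and suppose $S < 0$. From $\sigma_1(\lambda) \geq 0$ one immediately gets $\mu \geq -S = |S|$. On the other hand, decomposing $\sigma_2(\lambda) = \mu S + \sigma_2(\lambda_1,\ldots,\lambda_{n-1})$ and using the identity $2\sigma_2(\lambda_1,\ldots,\lambda_{n-1}) = S^2 - \sum_{i<n}\lambda_i^2$ together with Cauchy--Schwarz $\sum_{i<n}\lambda_i^2 \geq \frac{S^2}{n-1}$, I obtain
\[
\sigma_2(\lambda_1,\ldots,\lambda_{n-1}) \leq \frac{n-2}{2(n-1)}\,S^2.
\]
Combined with $\sigma_2(\lambda) \geq 0$ and $S<0$, this yields $\mu \leq \frac{n-2}{2(n-1)}|S|$, which contradicts $\mu \geq |S|$ since $\frac{n-2}{2(n-1)} < 1$ for $n \geq 2$.

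The argument is short and I do not anticipate a serious obstacle; the only mildly delicate point is to combine the two cone conditions $\sigma_1 \geq 0$ and $\sigma_2 \geq 0$ so as to pin $\mu$ simultaneously from below and from above in terms of $|S|$. The Cauchy--Schwarz estimate controlling $\sigma_2(\lambda_1,\ldots,\lambda_{n-1})$ by $S^2$ is the quantitative ingredient that makes the upper and lower bounds on $\mu$ inconsistent and thereby forces $S \geq 0$.
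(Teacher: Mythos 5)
Your proof is correct and follows essentially the same route as the paper: both reduce the claim to $\operatorname{tr}(M)-\langle Mm,m\rangle \geq \sum_{\ell=1}^{n-1}\lambda_\ell$ via the Rayleigh-quotient bound $\langle Mm,m\rangle\leq\lambda_{\max}$, and then invoke the fact that $\lambda\in\bar\Gamma_2$ forces the sum of the $n-1$ smallest eigenvalues to be nonnegative. The only difference is that the paper cites this last fact as well known, whereas you prove it (correctly) from $\sigma_1\geq 0$, $\sigma_2\geq 0$ and Cauchy--Schwarz, making your argument slightly more self-contained.
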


\begin{proof}
Using an orthogonal transformation, we may assume without loss of generality that $M$ is diagonal with diagonal entries $\lambda_1 \leq \lambda_2 \leq \ldots \leq \lambda_n$. Then $(\lambda_1, \ldots, \lambda_n) \in \bar\Gamma_2$. It is well known that this implies $\lambda_1 + \ldots + \lambda_{n-1} \geq 0$. Now as 
\[
M_{ij}(\delta_{ij} - m_i m_j)
	= \sum_{\ell = 1}^n \lambda_\ell - \sum_{\ell = 1}^n \lambda_\ell m_\ell^2
	\geq \sum_{\ell = 1}^n \lambda_\ell - \lambda_n \sum_{\ell = 1}^n m_\ell^2 
	= \sum_{\ell = 1}^{n-1} \lambda_\ell,
\]
the conclusion follows.
\end{proof}

We will use the following result on the mean curvatures of an immersed hypersurface with respect to two conformal metrics. Let $\Omega \subset \RR^n$ be an open set. Equip $\Omega$ with the Euclidean metric $\mathring{g}$ and a conformal metric $\mathring{g}_u := u^{\frac{4}{n-2}}\mathring{g}$ where $u$ is $C^2$. Let $f: \Sigma^{n-1} \rightarrow \Omega$ be a smooth immersion of a compact manifold $\Sigma^{n-1}$ into $\Omega$. Let $\tilde u = u \circ f$, $\tilde g = f^* \mathring{g}$. For every point $p \in \Sigma$, let $H_{\Sigma}(p)$ and $H_{\Sigma,u}(p)$ denote the mean curvature vectors associated to $f$ at $f(p)$ and with respect to $\mathring{g}$ and $\mathring{g}_u$, respectively. To dispel confusion, we note that, in our notation, the mean curvature is the trace of the second fundamental form. Note that if $\nu$ is a unit vector at $f(p)$ normal to the image of a small neighborhood of $p$, then
\begin{equation}
\partial_\nu u(f(p))  + \frac{n-2}{2(n-1)} \mathring{g}(H_\Sigma(p),\nu) u(f(p)) =  \frac{n-2}{2(n-1)} \mathring{g}_u(H_{\Sigma,u}(p), u^{-\frac{2}{n-2}} \nu) u^{\frac{n}{n-2}}.
	\label{Eq:06IV20-X1}
\end{equation}

\begin{lem}\label{Lem:08IV20-L2}
Let $\Omega$ be an open subset of $\RR^n$, $n \geq 3$, and $f:\Sigma^{n-1} \rightarrow \Omega$ be a smooth immersion. If $u \in C^2(\Omega)$ satisfies $\lambda(-A^u) \in \bar\Gamma_2$ in $\Omega$, then
\[
 \Delta_{\tilde g} \tilde u + \frac{n-2}{4(n-1)} |H_{\Sigma,u}|_{\mathring{g}_u}^2 \tilde u^{\frac{n+2}{n-2}} - \frac{n-2}{4(n-1)} |H_\Sigma|_{\mathring{g}}^2 \tilde u   - \frac{1}{(n-2) \tilde u}\,|\nabla_{\tilde g} \tilde u|^2
 	\geq 0 \text{ on } \Sigma.
\]
\end{lem}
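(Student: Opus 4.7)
The plan is to reduce the inequality to a pointwise algebraic identity at an arbitrary $p \in \Sigma$, by combining Lemma~\ref{Lem:08IV20-L1} with the conformal change-of-mean-curvature formula~\eqref{Eq:06IV20-X1}. First I would choose local coordinates on $\Sigma$ about $p$ such that $e_i := df(\partial_i)$, $i=1,\ldots,n-1$, is a $\mathring{g}$-orthonormal basis of the tangent space to $f(\Sigma)$ at $f(p)$, and such that the Christoffel symbols of $\tilde g$ vanish at $p$. Let $\nu$ be a unit $\mathring{g}$-normal to $f(\Sigma)$ at $f(p)$. Applying the chain rule to $\tilde u = u\circ f$ and decomposing the Euclidean derivative $D_{e_i}e_i$ at $p$ into its (vanishing) tangential part and its normal part (which records the second fundamental form), I obtain
\[
\Delta_{\tilde g}\tilde u(p) \;=\; \sum_{i=1}^{n-1}\nabla^2 u(e_i,e_i) \;-\; \mathring{g}(H_\Sigma,\nu)\,\partial_\nu u(f(p)),
\]
with the sign determined by the convention for $H_\Sigma$ implicit in~\eqref{Eq:06IV20-X1}.

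Next, invoking the hypothesis $\lambda(-A^u)\in\bar\Gamma_2$, I apply Lemma~\ref{Lem:08IV20-L1} with $M = -A^u(f(p))$ and $m = \nu$ to obtain $\sum_{i=1}^{n-1}(-A^u)(e_i,e_i) \ge 0$. Substituting the definition~\eqref{conformal-Hessian} of $A^u$, writing $|\nabla u|^2 = |\nabla^\Sigma u|^2 + (\partial_\nu u)^2$, and multiplying through by $\frac{n-2}{2}u^{(n+2)/(n-2)}$, this simplifies to
\[
\sum_{i=1}^{n-1}\nabla^2 u(e_i,e_i) + \frac{n-1}{n-2}\,u^{-1}(\partial_\nu u)^2 - \frac{1}{n-2}\,u^{-1}|\nabla^\Sigma u|^2 \;\ge\; 0.
\]

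Finally, I would substitute the first display into this inequality to replace $\sum \nabla^2 u(e_i, e_i)$ by $\Delta_{\tilde g}\tilde u + \mathring{g}(H_\Sigma,\nu)\,\partial_\nu u$. It then remains only to verify the purely algebraic identity
\[
\mathring{g}(H_\Sigma,\nu)\,\partial_\nu u + \frac{n-1}{n-2}\,u^{-1}(\partial_\nu u)^2 \;=\; \frac{n-2}{4(n-1)}\Bigl[|H_{\Sigma,u}|^2_{\mathring{g}_u}\,u^{(n+2)/(n-2)} - |H_\Sigma|^2_{\mathring{g}}\,u\Bigr],
\]
which follows at once by solving~\eqref{Eq:06IV20-X1} for $\partial_\nu u$ and squaring. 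Combined with $|\nabla^\Sigma u|^2 = |\nabla_{\tilde g}\tilde u|^2$ on the immersion, this transforms the displayed inequality above into precisely the claim of the lemma. The main obstacle is purely one of sign-bookkeeping: the sign of the $\mathring{g}(H_\Sigma,\nu)\,\partial_\nu u$ term in the Laplacian decomposition must be chosen to match the convention for $H_\Sigma$ used in~\eqref{Eq:06IV20-X1}, for otherwise the would-be cancellation of cross terms leaves a residual $2\,\mathring{g}(H_\Sigma,\nu)\,\partial_\nu u$ of indefinite sign and the argument collapses.
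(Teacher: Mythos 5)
Your proposal is correct and follows essentially the same route as the paper: apply Lemma~\ref{Lem:08IV20-L1} with $m=\nu$ to the (rescaled) matrix $-A^u$, decompose the tangential trace of $\nabla^2 u$ as $\Delta_{\tilde g}\tilde u$ plus the mean-curvature/normal-derivative term, and then use \eqref{Eq:06IV20-X1} to complete the square in $\partial_\nu u$. The only difference is cosmetic — you spell out the completing-the-square identity and the sign bookkeeping that the paper compresses into ``Using \eqref{Eq:06IV20-X1} yields the conclusion,'' and your resolution of the sign issue matches the convention the paper actually uses.
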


\begin{proof} Fix some $p \in \Sigma^{n-1}$ and let $\nu$ be a unit vector at $f(p)$ normal to the image of a small neighborhood of $p$,. Recall that
\[
A^u = -\frac{2}{n-2} u^{-\frac{n+2}{n-2}}\Big[\nabla^2 u - \frac{n}{(n-2)u} \nabla u \otimes \nabla u 
	+  \frac{1}{n-2} |\nabla u|^2\,I\Big].
\]
Applying Lemma \ref{Lem:08IV20-L1} with $M = - \frac{n-2}{2} u^{\frac{n+2}{n-2}}A^u(f(p))$ and $m = \nu$ yields
\[
0 
	\leq \nabla_i\,\nabla_j u\,\big(\delta_{ji} - \nu_i \nu_j\big) - \frac{1}{(n-2)u} \,|\nabla u|^2 + \frac{n}{(n-2)u} \,|\partial_\nu u|^2.
\]
This means
\[
0
	\leq \Delta_{\tilde g} \tilde u + \mathring{g}(H_\Sigma, \nu) \partial_\nu u \circ f  + \frac{n-1}{(n-2)\tilde u} \,|\partial_\nu u \circ f|^2 - \frac{1}{(n-2)\tilde u}\,|\nabla_{\tilde g} \tilde u|^2 \text{ on } \Sigma.
\]
Using \eqref{Eq:06IV20-X1} yields the conclusion.
\end{proof}

\begin{proof}[Proof of Theorem \ref{Thm:AnnularNoSub}]
Suppose by contradiction that $u \in C^2(\Omega)$ is such that $\lambda(-A^u) \in \bar\Gamma_2$ in $\Omega$ and $(\Omega, u^{\frac{4}{n-2}} \mathring{g})$ admits a smooth minimal immersion $f: \Sigma^{n-1} \rightarrow \Omega$ for some smooth compact manifold $\Sigma^{n-1}$. Here we have renamed $\underline{u}$ in the statement of the theorem as $u$ for notational convenience. Let $\nu$ denote a continuous unit normal along $\Sigma$. By Lemma \ref{Lem:08IV20-L2}, we have
\begin{align*}
 \Delta_{\tilde g} \tilde u - \frac{n-2}{4(n-1)} |H_\Sigma|_{\mathring{g}}^2 \tilde u   - \frac{1}{(n-2) \tilde u}\,|\nabla_{\tilde g} \tilde u|^2 \geq 0 \text{ on } \Sigma.
\end{align*}
Integrating over $\Sigma$, we thus have that $H_\Sigma \equiv 0$ and $\tilde u \equiv \textrm{const}$ on $\Sigma$. In particular, $f: \Sigma^{n-1} \rightarrow \Omega$ is a minimal immersion with respect to $\mathring{g}$. This is impossible as there is no smooth minimal immersion in $\RR^n$ with codimension one.
\end{proof}

\subsection{Preliminary ODE analysis}

By the uniqueness result in \cite{GonLiNg, LiNgWang}, the solutions $u$ in Theorems \ref{thm:MainLipNeg} and \ref{thm:MainLipNegCab} are radially symmetric, $u(x) = u(r)$ where $r = |x|$.

As in \cite{C-H-Y, Viac00-Duke}, we work on a round cylinder instead of $\RR^n$. Namely, let
\[
t
	= \ln r - \frac{1}{2} \ln(ab), \qquad 
\xi(t)	= -\frac{2}{n-2}\ln u(r) - \ln r
\]
so that $u^{\frac{4}{n-2}}\mathring{g} = e^{-2\xi} (dt^2 + g_{\mathbb{S}^{n-1}})$. A direct computation gives that, at points where $u$ is twice differentiable,
\begin{equation}
\sigma_k(\lambda(-A^u))
	= \frac{(-1)^k}{2^{k-1}}\,\Big(\begin{array}{c} n-1\\ k-1\end{array}\Big)\,e^{2k\xi}(1 - |\xi'|^2)^{k-1}[\xi'' + \frac{n-2k}{2k}(1-|\xi'|^2)],
\label{Eq:sigmakExpr}
\end{equation}
where here and below $'$ denotes differentiation with respect to $t$. 

Note that, for $k \geq 2$, at points where $u$ is twice differentiable, $\lambda(-A^u) \in \Gamma_k$ if and only if $\sigma_k(\lambda(-A^u)) > 0$ and $|\xi'| > 1$. Indeed, if $\sigma_k(\lambda(-A^u)) > 0$ and $|\xi'| > 1$, then \eqref{Eq:sigmakExpr} implies $\sigma_i(\lambda(-A^u)) > 0$ for $1 \leq i \leq k$ and so $\lambda(-A^u) \in \Gamma_k$. Conversely, if $\lambda(-A^u) \in \Gamma_k$ for some $k \geq 2$, then $\sigma_1(\lambda(-A^u)) > 0$, $\sigma_2(\lambda(-A^u)) > 0$ and $\sigma_k(\lambda(-A^u)) > 0$. Using \eqref{Eq:sigmakExpr}, we see that the first two inequalities imply $|\xi'| > 1$. 

By the same reasoning, we have, at points where $u$ is twice differentiable, if $\lambda(-A^u) \in \bar\Gamma_2$, then $|\xi'| \geq 1$. 

We are thus led to study the differential equation
\begin{equation}
e^{2k\xi}(1 - |\xi'|^2)^{k-1}[\xi'' + \frac{n-2k}{2k}(1-|\xi'|^2)] = \frac{(-1)^k n}{2k}.
	\label{Eq:xieq1}
\end{equation}
under the constraint that $|\xi'| > 1$.

It is well known (see \cite{C-H-Y, Viac00-Duke}) that \eqref{Eq:xieq1} has a first integral, namely
\[
H(\xi,\xi') := e^{(2k-n)\xi}(1 - |\xi'|^2)^k - (-1)^k e^{-n\xi} \text{ is (locally) constant along $C^2$ solutions.}
\]
A plot of the contours of $H$ for $k = 2, n = 7$ is provided in Figure \ref{Fig1}. See \cite{C-H-Y} for a more complete catalog. 

\begin{figure}[h]
\begin{center}
\includegraphics[scale=0.4]{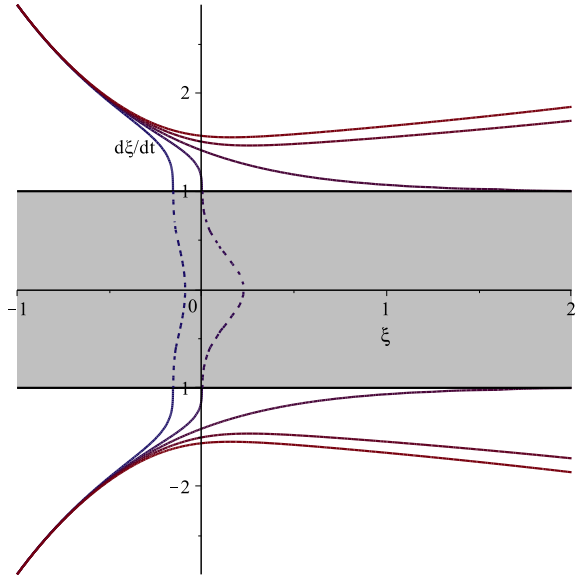}
\end{center}
\caption{The contours of $H$ for $k = 2$, $n = 7$. Each radially symmetric viscosity solution to \eqref{Eq:X1} lies on a single contour of $H$ but avoid the shaded region, i.e. the dotted parts of the contours of $H$ are excluded. Every smooth solution stays on one side of the shaded region. Every non-smooth solution jumps (on one contour) from the part below the shaded region to the part above the shaded region at a single non-differentiable point.}
\label{Fig1}
\end{figure}

Before moving on with the proofs of our results, we note the following statement.
\begin{rem}
As a consequence of Theorem \ref{thm:MainLipNegCab}, we have in fact that $H(\xi,\xi')$ is (locally) constant along viscosity solutions. 
\end{rem}

\begin{proof}
Fix $\tilde a < \tilde b$ in the domain of $u$ and apply Theorem \ref{thm:MainLipNegCab} relative to the interval $[\tilde a,\tilde b]$ with $c_1 = u(\tilde a)$ and $c_2 = u(\tilde b)$. If we are in cases 1--3, $u$ is $C^2(\tilde a,\tilde b)$ and so $H(\xi,\xi')$ is constant in $\{\tilde a < r < \tilde b\}$. Suppose we are in case 4. We have that $u$ is $C^2$ in $(\tilde a,m) \cup (m,\tilde b)$ for some $m$ and so $H(\xi,\xi')$ is constant in each of $\{\tilde a < r < m\}$ and $\{m < r < \tilde b\}$. Also, as $u$ is $C^1$ in each of $(\tilde a,m]$ and $[m,\tilde b)$, we have by assertion (iii) in case 4 that
\[
\lim_{r \rightarrow m^-} H(\xi(t),\xi'(t)) = H(\xi(t(m)),1) = H(\xi(t(m)),-1) = \lim_{r \rightarrow m^+} H(\xi(t),\xi'(t)).
\]
Hence $H(\xi,\xi')$ is also constant in $\{\tilde a < r < \tilde b\}$.
\end{proof}

\subsection{A lemma}
\label{ssec:lemma}

\begin{lem}\label{Lem:KeyLem}
For any $T > 0$, there exists a unique classical solution $\xi \in C^\infty(0,T) \cap C^{1,\frac{1}{k}}_{\rm loc}([0,T))$ to \eqref{Eq:xieq1} in $(0,T)$ such that
\begin{align}
&\lim_{t \rightarrow T^-} \xi(t) = -\infty,
	\label{Eq:xieq2}\\
&\xi'(0) = -1,\quad
	\xi'(t) < -1 \text{ in } (0,T).
	\label{Eq:xieq3}
\end{align}
Furthermore, for every $\gamma \in (\frac{1}{k}, 1]$, $\xi \notin C^{1,\gamma}_{\rm loc}([0,T))$.
\end{lem}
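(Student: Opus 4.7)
The strategy is to exploit the first integral $H(\xi,\xi') = e^{(2k-n)\xi}(1 - |\xi'|^2)^k - (-1)^k e^{-n\xi}$ stated just above the lemma in order to reduce \eqref{Eq:xieq1} to a first-order ODE. Writing $\xi_0 := \xi(0)$, the boundary condition $\xi'(0) = -1$ forces $H \equiv -(-1)^k e^{-n\xi_0}$; since $\xi' \le -1$ forces $\xi(t) < \xi_0$ on $(0,T)$, the only admissible branch of the conservation law is
\begin{equation*}
\xi'(t) = -\sqrt{1 + e^{-2\xi(t)}\bigl[1 - e^{n(\xi(t) - \xi_0)}\bigr]^{1/k}}, \qquad \xi(0) = \xi_0, \qquad \lim_{t \to T^-}\xi(t) = -\infty.
\end{equation*}
On $\{\xi < \xi_0\}$ the right-hand side is smooth in $\xi$, so standard ODE theory produces a unique $C^\infty$ solution on $(0,T)$ blowing down at a well-defined time. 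Separating variables and substituting $\eta = \xi - \xi_0$ identifies this blow-down time with
\begin{equation*}
T(\xi_0) := \int_{-\infty}^{0} \frac{d\eta}{\sqrt{1 + e^{-2\eta - 2\xi_0}[1 - e^{n\eta}]^{1/k}}},
\end{equation*}
so the entire problem reduces to choosing $\xi_0$ with $T(\xi_0) = T$.

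Next I would show that $\xi_0 \mapsto T(\xi_0)$ is a strictly increasing continuous bijection from $\mathbb{R}$ to $(0,\infty)$. Strict monotonicity is immediate from differentiating the integrand in $\xi_0$. For $\xi_0 \to -\infty$, I split at $\eta = -\epsilon$: the tail on $[-\epsilon,0]$ is bounded by $\epsilon$ uniformly in $\xi_0$, while on $\eta \le -\epsilon$ the lower bound $[1 - e^{n\eta}]^{1/k} \ge [1 - e^{-n\epsilon}]^{1/k}$ yields the estimate $T(\xi_0) \le \epsilon + C_\epsilon e^{\xi_0}$, whence $T(\xi_0) \to 0$. As $\xi_0 \to +\infty$ the integrand increases monotonically to $1$ pointwise, so $T(\xi_0) \to +\infty$. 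This gives a unique $\xi_0 = \xi_0(T)$. Uniqueness within the class \eqref{Eq:xieq1}--\eqref{Eq:xieq3} then follows because continuity of $(\xi,\xi')$ on $[0,T)$ combined with conservation of $H$ on $(0,T)$ forces $H(\xi_0,-1) = -(-1)^k e^{-n\xi_0}$ at the endpoint, pinning down the parameter $\xi_0$.

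The sharp regularity at $t = 0$ is obtained by Taylor expansion. Since $1 - e^{n(\xi-\xi_0)} = n(\xi_0 - \xi) + O((\xi_0 - \xi)^2)$, the first-order ODE gives
\begin{equation*}
\xi'(t) + 1 = -\tfrac{n^{1/k}}{2}\,e^{-2\xi_0}\,(\xi_0 - \xi(t))^{1/k} + O\bigl((\xi_0 - \xi(t))^{2/k}\bigr).
\end{equation*}
Bootstrapping from the a priori bound $0 \le \xi_0 - \xi(t) \le t$ (which follows from $\xi' \le -1$) yields $|\xi'(t)+1| = O(t^{1/k})$; reinjecting this into $\xi_0 - \xi(t) = t - \int_0^t (\xi'(s)+1)\,ds$ upgrades the bound to $\xi_0 - \xi(t) = t + O(t^{1+1/k})$, which in turn gives the sharp asymptotic
\begin{equation*}
\xi'(t) + 1 = -\tfrac{n^{1/k}}{2}\,e^{-2\xi_0}\,t^{1/k} + O(t^{2/k}) \quad \text{as } t \to 0^+.
\end{equation*}
The claim $\xi \in C^{1,1/k}_{\rm loc}([0,T))$ now follows by combining smoothness on $[\epsilon,T']$ for $T' < T$ with the subadditivity $|t^{1/k} - s^{1/k}| \le |t-s|^{1/k}$; conversely, if $\xi \in C^{1,\gamma}_{\rm loc}$ for some $\gamma > 1/k$, then $|\xi'(t) + 1| \le Ct^\gamma$ would contradict $|\xi'(t)+1| \sim c\,t^{1/k}$ with $c > 0$ as $t \to 0^+$. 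The main obstacle is precisely this bootstrap step: one must extract the refinement $\xi_0 - \xi(t) = t + O(t^{1+1/k})$ directly from the degenerate first-order ODE at the endpoint $|\xi'|^2 = 1$, without presupposing the regularity being proved, and then verify both the positive and negative Hölder statements from the resulting asymptotic.
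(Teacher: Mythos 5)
Your proposal is correct in outline and, at its core, follows the same route as the paper: both arguments reduce \eqref{Eq:xieq1} to the first-order ODE coming from the first integral $H$, identify the blow-down time with the explicit integral $T(\xi_0)=\int_{-\infty}^{0}\{1+e^{-2\eta-2\xi_0}[1-e^{n\eta}]^{1/k}\}^{-1/2}\,d\eta$ (this is \eqref{Eq:TpInt}), and conclude by showing $\xi_0\mapsto T(\xi_0)$ is a continuous, strictly increasing bijection onto $(0,\infty)$. The difference is one of packaging: the paper imports from Chang--Han--Yang the existence and endpoint asymptotics of the non-degenerate solutions $\xi_{p,q}$ with $q<-1$, and produces the degenerate solution $\xi_p$ (with $\xi_p'(0)=-1$) by time-shifting a $\xi_{\tilde p,\tilde q}$ on the same level set of $H$, found via the implicit function theorem; you instead construct the solution directly by separating variables at the degenerate initial point $\xi'(0)=-1$. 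Your version is more self-contained; since the right-hand side of the separated ODE is nonvanishing (equal to $-1$) at $\xi=\xi_0$, uniqueness for the first-order problem does hold despite the failure of Lipschitz continuity there, exactly by the inversion $t=\int_{\xi}^{\xi_0}(-F(s))^{-1}ds$ you implicitly use. Your uniqueness argument for the original problem (conservation of $H$ on $(0,T)$ plus continuity of $(\xi,\xi')$ at $t=0$ pins down $\xi_0$) is the same as the paper's Step 2, stated more directly.

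Two points need repair. First, the a priori bound you quote, $0\le\xi_0-\xi(t)\le t$ ``from $\xi'\le -1$'', is reversed: $\xi'\le -1$ gives $\xi_0-\xi(t)\ge t$. The upper bound you actually need for the bootstrap, $\xi_0-\xi(t)\le Ct$, is still available because $-F$ is bounded near $\xi_0$ (equivalently $\xi'$ extends continuously to $t=0$ with value $-1$), so the bootstrap survives, but as written the justification is wrong. Second, the positive regularity claim $\xi\in C^{1,1/k}_{\rm loc}([0,T))$ does not follow from the pointwise asymptotic $\xi'(t)+1=-\tfrac{c}{2}t^{1/k}+O(t^{2/k})$ together with the subadditivity of $t\mapsto t^{1/k}$: the error term is only controlled pointwise, and $E(t)=O(t^{2/k})$ does not imply $|E(t)-E(s)|\le C|t-s|^{1/k}$ for $s,t$ in an intermediate range. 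The clean fix --- and the one the paper uses --- is to differentiate the first-order ODE once more to get $\xi''(t)=O(t^{\frac{1}{k}-1})$ as $t\to 0^+$ (indeed $t^{\frac{k-1}{k}}\xi''(t)$ has a finite negative limit), and then integrate $\xi''$ to obtain both the $C^{1,\frac1k}$ bound and, from the nonvanishing of the limit, the failure of $C^{1,\gamma}$ for $\gamma>\frac1k$. Your pointwise asymptotic does correctly give the negative statement; it is only the positive H\"older statement that needs the derivative bound.
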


\begin{proof} We use ideas from \cite{C-H-Y}.

\medskip
\noindent
{\it Step 1:} We start by collecting relevant facts from \cite{C-H-Y} about the classical solution $\xi_{p,q}$ to \eqref{Eq:xieq1} satisfying the initial condition $\xi_{p,q}(0) = p$ and $\xi_{p,q}'(0) = q$ for $p \in \RR,q  \in (-\infty,-1)$  on its maximal interval of unique existence $I_{p,q} = (\underline{T}_{p,q}, \overline{T}_{p,q}) \subset \RR$.

Note that, since $\xi_{p,q}'(0) = q < -1$, it follows from \eqref{Eq:xieq1} that, for as long as $\xi_{p,q}$ remains $C^2$, $\xi_{p,q}' < -1$. Thus, as $H(\xi_{p,q}, \xi_{p,q}') = H(p,q)$, we have in $I_{p,q}$ that
\begin{equation}
 \xi_{p,q}'  = -\Big\{1 + e^{-2\xi_{p,q}}\big[1 +  (-1)^k H(p,q)e^{n\xi_{p,q}}\big]^{1/k}\Big\}^{1/2}.
 	\label{Eq:xipq'}
\end{equation}

By \cite{C-H-Y}(Theorem 1, Cases II.2 and II.3 for even $k$ and Theorem 2, Cases II.2 and II.3 for odd $k$), we have that $\overline{T}_{p,q}$ is finite (corresponding to $r_+$ being finite in the notation of \cite{C-H-Y}). Furthermore, 
\begin{equation}
\lim_{t \rightarrow \overline{T}_{p,q}^-} \xi_{p,q}(t) = -\infty.
	\label{Eq:oTend}
\end{equation}
By \eqref{Eq:xipq'} we thus have
\begin{equation}
\overline{T}_{p,q}
	= \int_{-\infty}^{p}
	\Big\{1 + e^{-2\xi}\big[1 - |H(p,q)| e^{n\xi}\big]^{1/k}\Big\}^{-1/2}\,d\xi.
		\label{Eq:oTpqExpr}
\end{equation}

In this proof, we will only need to consider the case that $(-1)^k H(p,q) < 0$. Then by \cite{C-H-Y}(Theorem 1, Case II.2 for even $k$ and Theorem 2, Case II.2 for odd $k$), we have that $\underline{T}_{p,q}$ is also finite (corresponding to $r_-$ being finite in the notation of \cite{C-H-Y}) and
\begin{equation}
\lim_{t \rightarrow \underline{T}_{p,q}^+} \xi_{p,q}(t) \text{ is finite}, \quad  \lim_{t \rightarrow \underline{T}_{p,q}^+} \xi_{p,q}'(t) = -1, \text{ and } \lim_{t \rightarrow \underline{T}_{p,q}^+} \xi_{p,q}''(t) = -\infty.
	\label{Eq:lTend}
\end{equation}

Using  \eqref{Eq:lTend} as well as the fact that $H(\xi_{p,q}, \xi_{p,q}') = H(p,q)$ and $\xi_{p,q}$ is decreasing, we have in $I_{p,q}$ that
\begin{equation}
\xi_{p,q} < \lim_{t \rightarrow \underline{T}_{p,q}^+} \xi_{p,q}(t) = -\frac{1}{n} \ln |H(p,q)| .
	\label{Eq:09I20-LVal}
\end{equation}
Differentiating \eqref{Eq:xipq'}, we see that, as $t \rightarrow \underline{T}_{p,q}^+$, 
\[
\lim_{t \rightarrow \underline{T}_{p,q}^+} (t - \underline{T}_{p,q})^{\frac{k-1}{k}}\xi_{p,q}''(t) \text{ exists and belongs to } (-\infty,0).
\]
Thus $\xi_{p,q}$ extends to a $C^{1,\frac{1}{k}}$ function in a neighborhood of $\underline{T}_{p,q}$ and $\xi_{p,q}$ does not extend to a $C^{1,\gamma}$ function in any neighborhood of $\underline{T}_{p,q}$.

Before moving on to the next stage, we note that, in view of \eqref{Eq:xipq'},
\begin{align}
\overline{T}_{p,q} - \underline{T}_{p,q} 
	&= \int_{-\infty}^{-\frac{1}{n} \ln |H(p,q)| }
	\Big\{1 + e^{-2\xi}\big[1 - |H(p,q)| e^{n\xi}\big]^{1/k}\Big\}^{-1/2}\,d\xi\nonumber\\
	&= \int_{-\infty}^{0}
	\Big\{1 +  |H(p,q)|^{\frac{2}{n}} e^{-2\eta }\big[1 -  e^{n\eta}\big]^{1/k}\Big\}^{-1/2}\,d\eta.
		\label{Eq:LengthIpq}
\end{align}
In particular, then length of $I_{p,q}$ depends only on $n$, $k$ and the value of $H(p,q)$, rather than $p$ and $q$ themselves.

\bigskip
\noindent{\it Step 2:} We now define for each given $p \in \RR$ a unique classical solution $\xi_p$ to \eqref{Eq:xieq1} in some maximal interval $(0,T_p)$ satisfying $\xi_p(0) = p, \xi_p'(0) = -1$ and $\xi_p' < -1$ in $(0,T_p)$.

It is clear that $(-1)^k H(p,-1) = -e^{-np} < 0$, and as $\partial_p H(p,-1) = (-1)^k n e^{-np} \neq 0$. By the implicit function theorem, there exist $\tilde p$ and $\tilde q < -1$ such that $H(\tilde p, \tilde q) = H(p,-1)$. Note that this implies
\[
-e^{-np} = (-1)^k H(\tilde p, \tilde q) > -e^{-n\tilde p} \text{ and so } \tilde p < p.
\]

Let
\[
\xi_p(t) = \xi_{\tilde p, \tilde q}(t + \underline{T}_{\tilde p, \tilde q}) \text{ and } T_p = \overline{T}_{\tilde p, \tilde q} - \underline{T}_{\tilde p, \tilde q}.
\]
By Step 1, it is readily seen that $\xi_p$ is smooth in $(0,T_p)$, belongs to $C^{1,\frac{1}{k}}_{\rm loc}([0,T_p))$ and no $C^{1,\gamma}_{\rm loc}([0,T_p))$ with $\gamma > \frac{1}{k}$, satisfies \eqref{Eq:xieq1} and $\xi_p' < -1$ in $(0,T_p)$,
\begin{align}
&\lim_{t \rightarrow T_p^-} \xi_{p}(t) = -\infty,
	\label{Eq:Tpend}\\
&\xi_p(0) =  -\frac{1}{n}\ln|H(\tilde p, \tilde q)| = p, \quad \xi_p'(0) = -1,
	\label{Eq:0enda}\\
&0 > \xi_{p,q}''(t) = O(t^{-\frac{k-1}{k}}) \text{ as } t \rightarrow 0^+,
	\label{Eq:0endb}\\
&\text{ and } T_p = \int_{-\infty}^{0}
	\Big\{1 + e^{-2\eta - 2p}\big[1 - e^{n\eta}\big]^{1/k}\Big\}^{-1/2}\,d\eta.
	\label{Eq:TpInt}
\end{align}

We claim that $\xi_p$ is unique in the sense that if $\hat \xi_p \in C^2(0,\hat T_p) \cap C^1([0,\hat T_p))$ is a solution to \eqref{Eq:xieq1} in some maximal interval $(0,\hat T_p)$ satisfying $\hat\xi_p(0) = p, \hat\xi_p'(0) = -1$ and $\hat\xi_p' < -1$ in $(0,\hat T_p)$, then $T_p = \hat T_p$ and $\xi_p \equiv \hat \xi_p$. To see this, note that, $\hat \xi_p(t) = \xi_{\hat \xi_p(s),\hat\xi_p'(s)}(t - s)$ for all $t, s \in (0,\hat T_p)$, since they both satisfy the same ODE in $t$ and agree up to first derivatives at $t = s$. By Step 1, $\hat\xi_p(t) \rightarrow -\infty$ as $t \rightarrow \hat T_p^{-}$, and so, as $\tilde p < p$ and $\hat \xi_p(0) = p$, there exists $t_0 \in (0,\hat T_p)$ such that $\hat \xi_p(t_0) = \tilde p$. This implies that $H(\tilde p, \hat\xi_p'(t_0)) = H(\hat\xi_p,\hat\xi_p') = H(p,-1) = H(\tilde p, \tilde q)$ and so $\hat \xi_p'(t_0) = \tilde q$. We deduce that $t_0 = - \underline{T}_{\tilde p, \tilde q}$, $\hat  T_p = T_p$ and $\hat \xi_p \equiv \xi_{\tilde p,\tilde q}(\cdot - t_0) \equiv \xi_p$, as claimed.

\bigskip
\noindent{\it Step 3:} From \eqref{Eq:TpInt}, we see that, as a function of $p$, $T_p$ is continuous and increasing and satisfies
\[
\lim_{p \rightarrow - \infty} T_p = 0 \text{ and } \lim_{p \rightarrow \infty} T_p = \infty.
\]
Thus, for any given $T > 0$, there is a unique $p(T)$ such that $T_{p(T)} = T$. The solution $\xi_{p(T)}$ to \eqref{Eq:xieq1} gives the desired solution.
\end{proof}

\subsection{Proof of Theorem \ref{thm:MainLipNeg}}
\label{ssec:Main}

Let $T = \frac{1}{2}\ln \frac{b}{a}$ and $t = \ln r - \frac{1}{2}\ln(ab)$. We need to exhibit a function $\xi: (-T, T) \rightarrow \RR$ such that $\xi$ is smooth in each of $(0,T)$ and $(-T, 0)$, is $C^{1,\frac{1}{k}}_{\rm loc}$ but not $C^{1,\gamma}_{\rm loc}$ for any $\gamma > \frac{1}{k}$ in each of $[0,T)$ and $(-T,0]$, the function $u$ defined by
\[
u(r) = \exp\Big[-\frac{n-2}{2}\Big(\xi(t) + \ln r\Big)\Big]
\]
solves \eqref{Eq:X1}-\eqref{Eq:X1BC} in $\{a < r = |x| < b\}$ in the viscosity sense, and
\begin{enumerate}[(i)]
\item $\lim_{t \rightarrow \pm T} \xi(t) = -\infty$,
\item $\xi'(0^-) = 1$, $\xi'(0^+) = -1$,
\item and $|\xi'| > 1$ in $(-T,0) \cup (0,T)$.
\end{enumerate}
Indeed, let $\xi^T:[0,T) \rightarrow \RR$ be the solution obtained in Lemma \ref{Lem:KeyLem}, and define
\[
\xi(t) = \left\{\begin{array}{ll}
	\xi^T(t) & \text{ if } 0 \leq t < T,\\
	\xi^T(-t) & \text{ if } -T < t < 0.
\end{array}\right.
\]
It is clear that $\xi$ satisfies all the listed requirements except for the statement that $u$ satisfies \eqref{Eq:X1} in the viscosity sense at $r =  \sqrt{ab}$. It remains to demonstrate, for any given $x_0$ with $|x_0| = \sqrt{ab}$, that
\begin{enumerate}[(a)]
\item if $\varphi$ is $C^2$ near $x_0$ and satisfies $\varphi \geq u$ near $x_0$ and $\varphi(x_0) = u(x_0)$, then $\lambda(-A^\varphi(x_0)) \in \Gamma_k$ and $\sigma_k(\lambda(-A^\varphi(x_0))) \geq 2^{-k} \big(\begin{array}{c}n\\k\end{array}\big)$,
\item and if $\varphi$ is $C^2$ near $x_0$ and satisfies $\varphi \leq u$ near $x_0$ and $\varphi(x_0) = u(x_0)$, then either $\lambda(-A^\varphi(x_0)) \notin \Gamma_k$ or $\lambda(-A^\varphi(x_0)) \in \Gamma_k$ but $\sigma_k(\lambda(-A^\varphi(x_0))) \leq 2^{-k} \big(\begin{array}{c}n\\k\end{array}\big)$.
\end{enumerate}

Without loss of generality, we may assume that $x_0 = (\sqrt{ab}, 0, \ldots, 0)$. 

Since $\partial_r \ln u|_{r = \sqrt{ab}^-} = -\frac{n-2}{\sqrt{ab}}<  0= \partial_r \ln u|_{r = \sqrt{ab}^+}$, there is no $C^2$ function $\varphi$ such that $\varphi \geq u$ near $x_0$ and $\varphi(x_0) = u(x_0)$. Therefore (a) holds.

Suppose now that $\varphi$ is a $C^2$ function such that $\varphi \leq u$ near $x_0$ and $\varphi(x_0) = u(x_0)$. As $u$ is radial, this implies that
\begin{align}
-\frac{n-2}{\sqrt{ab}} 
	&= \partial_{x_1} \ln u|_{r = \sqrt{ab}^-} \leq \partial_{x_1} \ln \varphi (x_0) \leq \partial_{x_1} \ln u|_{r = \sqrt{ab}^+} = 0,
		\label{Eq:Y1}\\
\partial_{x_2} \ln \varphi (x_0)
	&= \ldots = \partial_{x_n} \ln \varphi (x_0) = 0,
	\label{Eq:Y2}\\
\Big(\partial_{x_i}\partial_{x_j} \varphi(x_0) &- \frac{1}{\sqrt{ab}} \partial_{x_1}\varphi(x_0) \delta_{ij}\Big)_{2 \leq i,j \leq n} \leq 0.
	\label{Eq:Y3}
\end{align}
(For \eqref{Eq:Y3}, note that the matrix on the left hand side is the Hessian of $\varphi|_{\partial B_{\sqrt{ab}}}$ with respect to the metric induced on $\partial B_{\sqrt{ab}}$ by the Euclidean metric.) Now define $\bar \varphi(x) = \bar \varphi(|x|) = \varphi(|x|,0, \ldots, 0)$, $t = \ln r - \frac{1}{2} \ln(ab)$ and $\bar \xi(t) = -\frac{2}{n-2}\ln \bar\varphi(r) - \ln r$. By \eqref{Eq:Y1}, we have that $|\frac{d\bar\xi}{dt}(0)|\leq 1$ and so $\lambda(-A^{\bar\varphi}(x_0)) \notin \Gamma_k$. 

Let $O$ denote the diagonal matrix with diagonal entries $1, -1, \ldots, -1$. Note that, in block form,
\[
\nabla^2 \varphi(x_0) + O^t \nabla^2 \varphi(x_0)O = 2\left(\begin{array}{c|c}
\partial_{x_1}^2 \varphi(x_0) & 0\\
\hline
0& \big(\partial_{x_i}\partial_{x_j} \varphi(x_0) \big)_{2 \leq i,j \leq n}
\end{array}\right).
\]
Thus, by \eqref{Eq:Y3},
\[
\nabla^2 \varphi(x_0) + O^t \nabla^2 \varphi(x_0)O \leq 2\left(\begin{array}{c|c}
\partial_{x_1}^2 \varphi(x_0) & 0\\
\hline
0& \frac{1}{\sqrt{ab}} \partial_{x_1} \varphi(x_0) (\delta_{ij})_{2 \leq i,j \leq n}
\end{array}\right) = 2\nabla^2 \bar\varphi(x_0).
\]
Also, $\varphi(x_0) = \bar\varphi(x_0)$ and, in view of \eqref{Eq:Y2}, $\nabla\varphi(x_0) = \nabla\bar\varphi(x_0)$. Hence
\[
-A^{\varphi}(x_0) - O^tA^{\varphi}(x_0)O \leq - 2A^{\bar\varphi}(x_0).
\]
As $\lambda(-A^{\bar\varphi}(x_0)) \notin \Gamma_k$, it follows that $\lambda(-A^{\varphi}(x_0)- O^tA^{\varphi}(x_0)O) \notin\Gamma_k$. Since the set of matrices with eigenvalues belonging to $\Gamma_k$ is a convex cone (see e.g. \cite[Lemma B.1]{LiNgGreen}), we thus have that $\lambda(-A^{\varphi}(x_0)) \notin\Gamma_k$ or $\lambda(- O^tA^{\varphi}(x_0)O) \notin\Gamma_k$. Since $O$ is orthogonal, we deduce that $\lambda(-A^{\varphi}(x_0)) \notin\Gamma_k$. We have verified (b) and thus completed the proof.\hfill$\Box$

\subsection{Proof of Theorem \ref{thm:MainLipNegCab}}
\label{ssec:Cab}
As mentioned before, the uniqueness of solution follows from \cite{GonLiNg, LiNgWang}. We proceed to construct a radially symmetric solution with the indicated properties.

Let $T = \frac{1}{2}\ln \frac{b}{a}$, $p_a = -\frac{2}{n-2}\ln \consta - \ln a$ and $p_b = -\frac{2}{n-2}\ln \constb - \ln b$. We will only consider the case that $p_a \geq p_b$ (which is equivalent to $b^{\frac{n-2}{2}}\constb \geq a^{\frac{n-2}{2}}\consta$). (The case $p_a < p_b$ can be treated using an inversion about $|x| = \sqrt{ab}$.) We then have
\begin{align*}
T(a,b,\consta,\constb)	
	&=\frac{1}{2}\int_{p_b- p_a}^{0} \Big\{1 + e^{-2\eta-2p_a}\big[1 - e^{n\eta}\big]^{1/k}\Big\}^{-1/2}\,d\eta\\
	&=\frac{1}{2}\int_{p_b}^{p_a}
	\Big\{1 + e^{-2\xi}\big[1 - e^{n(\xi - p_a)}\big]^{1/k}\Big\}^{-1/2}\,d\xi
\end{align*}

\noindent(i) Suppose that $T < T(a,b,\consta,\constb)$. We show that Case 1 holds.
\medskip

Note that $H(p_a,-1) = -(-1)^k e^{-np_a}$. Thus as $T < T(a,b,\consta,\constb)$ and $(-1)^kH(p_a,\cdot)$ is decreasing in $(-\infty,-1)$, we can find $q_a < -1$ such that
\begin{equation}
T
	= \frac{1}{2}\int_{p_b}^{p_a}
		\Big\{1 + e^{-2\xi}\big[1 + (-1)^k H(p_a,q_a) e^{n\xi}\big]^{1/k}\Big\}^{-1/2}\,d\xi.
	\label{Eq:qaDef}
\end{equation}

Recall the solution $\xi_{p_a,q_a}$ to \eqref{Eq:xieq1} considered in the proof of Lemma \ref{Lem:KeyLem}. By \eqref{Eq:oTpqExpr}, we have that $2T < \overline{T}_{p_a,q_a}$. We then deduce from \eqref{Eq:xipq'} and \eqref{Eq:qaDef} that
\[
\xi_{p_a,q_a}(2T) = p_b.
\]
It thus follows that $\xi(t) = \xi_{p_a,q_a}(t + T)$ is smooth in $[-T,T]$, satisfies \eqref{Eq:xieq1} and $\xi' < -1$ in $(-T,T)$, as well as $\xi(-T) = p_a$ and $\xi(T) = p_b$. Returning to $u = \exp\big(-\frac{n-2}{2}\big(\xi(\ln r - \frac{1}{2}\ln(ab)) + \ln r\big)$ we obtain the conclusion.

\medskip 
\noindent(ii) Suppose that $T = T(a,b,\consta,\constb)$. We show that Case 3 holds.
\medskip

Recalling the definition of $T(a,b,\consta,\constb)$, we see that as $T > 0$, we have $p_a \neq p_b$. As $p_a \geq p_b$, we have $p_a > p_b$. We can now follow the argument in (i) with $\xi_{p_a, q_a}$ replaced by $\xi_{p_a}$ (defined in the proof of Lemma \ref{Lem:KeyLem}) to reach the conclusion. We omit the details.

\medskip
\noindent(iii) Suppose that $T > T(a,b,\consta,\constb)$. We show that Case 4 holds.
\medskip

In this case, we select $p \geq p_a (\geq p_b)$ such that
\begin{align*}
T 
	&= \frac{1}{2}\int_{p_b- p}^{0} \Big\{1 + e^{-2\eta-2p}\big[1 - e^{n\eta}\big]^{1/k}\Big\}^{-1/2}\,d\eta\\
		&\quad
	+  \frac{1}{2}\int_{p_a- p}^{0} \Big\{1 + e^{-2\eta-2p}\big[1 - e^{n\eta}\big]^{1/k}\Big\}^{-1/2}\,d\eta
\end{align*}
Such $p$ exists as the right hand side tends to $T(a,b,\consta,\constb)$ when $p \rightarrow p_a$ and diverges to $\infty$ as $p \rightarrow \infty$. Recall the solution $\xi_p$ defined in the proof of Lemma \ref{Lem:KeyLem}. Let 
\[
T_+ =  \frac{1}{2}\int_{p_b- p}^{0} \Big\{1 + e^{-2\eta-2p}\big[1 - e^{n\eta}\big]^{1/k}\Big\}^{-1/2}\,d\eta
\]
and
\[
T_- =  \frac{1}{2}\int_{p_a- p}^{0} \Big\{1 + e^{-2\eta-2p}\big[1 - e^{n\eta}\big]^{1/k}\Big\}^{-1/2}\,d\eta.
\]
Then $2T_\pm < T_p$ and the function $\xi_p$ satisfies $\xi_p(2T_+) = p_b$ and $\xi_p(2T_-) = p_a$. 

We then let
\[
\xi(t) = \left\{\begin{array}{ll}
	\xi_p(T_+ - T_- + t ) & \text{ if } - T_+ + T_-  \leq t < T,\\
	\xi_p(-T_+ + T_- - t) & \text{ if } -T < t < - T_+ + T_- .
\end{array}\right.
\]
We can then proceed as in the proof of Theorem \ref{thm:MainLipNeg} to show that $\xi$ is the desired solution.\hfill$\Box$


\appendix
\section{Appendix: Proof of Theorem \ref{Thm:GV}}

We abbreviate $u^{\frac{4}{n-2}}g$ as $g_u$. For small $\tau > 0$, let 
\begin{align*}
A_{g_u}^\tau 
	&= A_{g_u} + \tau \textrm{tr}_{g_u}(A_{g_u})g_u\\
	&= -\frac{2}{n-2} u^{-1}\,(\nabla_g^2 u + \tau \Delta_g u\,g) + \frac{2n}{(n-2)^2} u^{-2} d u \otimes d u - \frac{2}{(n-2)^2}\,u^{-\frac{2n}{n-2}}\,|\nabla_g u|_g^2\,g \\
		&\qquad 
	 + A_g + \frac{\tau}{2(n-1)} R_g\,g.
\end{align*}
By \cite[Theorem 1.4]{Gursky-Viaclovsky:negative-curvature}, we have for all sufficiently small $\tau > 0$ that the problem
\begin{equation}
\sigma_k\Big(\lambda\Big(-A_{g_{u_\tau}}^\tau\Big)\Big) 
	= 2^{-k} \Big(\begin{array}{c}n\\k\end{array}\Big), 
	\quad \lambda\Big(-A_{g_{u_\tau}}^\tau\Big) \in \Gamma_k, \quad u_\tau  > 0 \quad  \text{ in } M,
		\label{Eq:X1Mnfdtau}
\end{equation}
has a unique smooth solution $u_\tau$. Furthermore, by \cite[Propositions 3.2 and 4.1]{Gursky-Viaclovsky:negative-curvature}, the family $\{u_\tau\}$ is bounded in $C^1(M)$ as $\tau \rightarrow 0$. ($C^2$ bounds for $u_\tau$ were also proved in \cite{Gursky-Viaclovsky:negative-curvature}, but these bounds are unbounded as $\tau \rightarrow 0$.) Hence, along some sequence $\tau_i \rightarrow 0$, $u_{\tau_i}$ converges uniformly to some $u \in C^{0,1}(M)$. To conclude, we show that $u$ is a viscosity solution to \eqref{Eq:X1Mnfd}.

For notational convenience, we rename $u_{\tau_i}$ as $u_i$.

Fix some $\bar x \in M$.

\medskip
\noindent{\it Step 1:} We show that $u$ is a sub-solution to \eqref{Eq:X1Mnfd} at $\bar x$. More precisely, we show that for every $\varphi \in C^2(M)$ such that $\varphi \geq u$ on $M$ and $\varphi(\bar x) = u(\bar x)$ there holds that 
\begin{equation}
\lambda\Big(-A_{g_\varphi}(\bar x)\Big) \in \Big\{\lambda \in \Gamma_k\Big| \sigma_k(\lambda) \geq 2^{-k} \Big(\begin{array}{c}n\\k\end{array}\Big)\Big\} = \overline{S}_k =: \overline{S} .
	\label{Eq:02I20-E1}
\end{equation}

Here $d_g$ denotes the distance function of $g$ and $B_\delta(\bar x)$ denote the open geodesic ball of radius $\delta$ and centered at $\bar x$ with respect to $g$. Fix some arbitrary small $\delta > 0$ so that $\varphi_\delta := \varphi + \delta\,d_g(\cdot, \bar x)^2$ is $C^2$ in $\overline{B_\delta(\bar x)}$. 

Note that
\begin{equation}
\varphi_\delta = \varphi + \delta^3 \geq u + \delta^3 \text{ on } \partial B_\delta(\bar x) \text{ and } \varphi_\delta(\bar x) = u(\bar x).
	\label{Eq:02I20-E2}
\end{equation}
Select $x_{i,\delta} \in \overline{B_\delta(\bar x)}$ such that 
\[
(\varphi_\delta - u_i)(x_{i,\delta}) =  \inf_{B_\delta(\bar x)} (\varphi_\delta - u_i) =: m_{i,\delta}.
\]
By \eqref{Eq:02I20-E2} and the uniform convergence of $u_i$ to $u$, we have that $x_{i,\delta} \in B_\delta(\bar x)$. It follows that
\[
\nabla_g (\varphi_\delta - u_i)(x_{i,\delta}) = 0, \qquad \nabla_g^2 (\varphi_\delta - u_i)(x_{i,\delta}) \geq 0
\]
and so
\[
-A_{g_{\varphi_\delta - m_{i,\delta}}}^{\tau_i}(x_{i,\delta}) \geq -A_{g_{u_i}}^{\tau_i}(x_{i,\delta}).
\]
Recalling \eqref{Eq:X1Mnfdtau}, we hence have
\begin{equation}
\lambda\Big(-A_{g_{\varphi_\delta - m_{i,\delta}}}^{\tau_i}(x_{i,\delta})\Big) \in \overline{S}.
	\label{Eq:02I20-E3}
\end{equation}
On the other hand, as $\bar x$ is the unique minimum point of $\varphi_\delta - u$ in $\overline{B_\delta(\bar x)}$, we have $x_{i,\delta} \rightarrow \bar x$ and $m_{i,\delta} \rightarrow 0$ as $i \rightarrow \infty$. We can now pass $i \rightarrow \infty$ in \eqref{Eq:02I20-E3} to obtain
\[
\lambda\Big(-A_{g_{\varphi_\delta}}(\bar x)\Big) \in \overline{S}.
\]
Since $\delta$ is arbitrary, this proves \eqref{Eq:02I20-E1} after sending $\delta \rightarrow 0$.

\medskip
\noindent{\it Step 2:} We show that $u$ is a super-solution to \eqref{Eq:X1Mnfd} at $\bar x$, i.e. if $\varphi \in C^2(M)$ is such that $\varphi \leq u$ on $M$ and $\varphi(\bar x) = u(\bar x)$, then 
\begin{equation}
\lambda\Big(-A_{g_\varphi}(\bar x)\Big) \in \RR^n \setminus \Big\{\lambda \in \Gamma_k\Big| \sigma_k(\lambda) > 2^{-k} \Big(\begin{array}{c}n\\k\end{array}\Big)\Big\} = \underline{S}_k =: \underline{S}.
	\label{Eq:02I20-M1}
\end{equation}

The proof is analogous to that in Step 1. Fix some arbitrary small $\delta > 0$ so that $\hat \varphi_\delta := \varphi_{-\delta} = \varphi - \delta\,d_g(\cdot, \bar x)^2$ is $C^2$ in $\overline{B_\delta(\bar x)}$. Clearly
\[
\hat\varphi_{\delta}  \leq u - \delta^3 \text{ on } \partial B_\delta(\bar x) \text{ and } \hat\varphi_{\delta}(\bar x) = u(\bar x).
\]
We next select $\hat x_{i,\delta} \in \overline{B_\delta(\bar x)}$ such that 
\[
(\hat\varphi_{\delta} - u_i)(\hat x_{i,\delta}) =  \sup_{B_\delta(\bar x)} (\hat\varphi_{\delta} - u_i) =: \hat m_{i,\delta}.
\]
As before, we have $\hat x_{i,\delta} \in B_\delta(\bar x)$, $\nabla_g (\hat\varphi_{\delta} - u_i)(\hat x_{i,\delta}) = 0$, $\nabla_g^2 (\hat\varphi_{\delta} - u_i)(\hat x_{i,\delta}) \leq 0$ and 
\[
-A_{g_{\hat\varphi_{\delta} - \hat m_{i,\delta}}}^{\tau_i}(\hat x_{i,\delta}) \leq -A_{g_{u_i}}^{\tau_i}(\hat x_{i,\delta}).
\]
By \eqref{Eq:X1Mnfdtau}, we hence have
\begin{equation}
\lambda\Big(-A_{g_{\hat\varphi_{\delta} - \hat m_{i,\delta}}}^{\tau_i}(\hat x_{i,\delta})\Big) \in \underline{S}.
	\label{Eq:02I20-M3}
\end{equation}
Also, as $\hat x_{i,\delta} \rightarrow \bar x$ and $\hat m_{i,\delta} \rightarrow 0$ as $i \rightarrow \infty$, we can first pass $i \rightarrow \infty$ and then $\delta \rightarrow 0$ in \eqref{Eq:02I20-M3} to reach \eqref{Eq:02I20-M1}.
\hfill$\Box$


\newcommand{\noopsort}[1]{}

\end{document}